\newcommand{\eps}{\varepsilon}
\newcommand{\R}{\mathbb{R}}
\newcommand{\Q}{\mathbb{Q}}
\newcommand{\C}{\mathbb{C}}
\newcommand{\Z}{\mathbb{Z}}
\newcommand{\A}{\mathfrak{A}}
\newcommand{\es}[1]{\begin{equation}\begin{split}#1\end{split}\end{equation}}
\newcommand{\est}[1]{\begin{equation*}\begin{split}#1\end{split}\end{equation*}}
\newcommand{\as}[1]{\begin{align}#1\end{align}}
\newcommand{\tn}[1]{\textnormal{#1}}
\newcommand{\sumprime}{\sideset{}{'}\sum}
\renewcommand{\mod}[1]{~\pr{\textnormal{mod}~#1}}
\newtheorem*{theo*}{Theorem}
\newtheorem{theo}{Theorem}
\newtheorem{theorem}[theo]{Theorem}
\newtheorem{ezer}{Exercise}
\newtheorem{prop}[ezer]{Proposition}
\newtheorem{lemma}{Lemma}
\newtheorem{corol}[lemma]{Corollary}
\newtheorem{remark}{Remark}
\newtheorem*{rem*}{Remark}
\newcommand{\pr}[1]{\left( #1\right)}
\newcommand{\pa}[1]{\left\langle #1\right\rangle}
\newcommand{\pg}[1]{\left\{ #1\right\}}
\newcommand{\e}[1]{\operatorname{e}\pr{ #1}}
\newcommand{\cc}{\operatorname{c}}
\newcommand{\comment}[1]{}
\let\originalleft\left
\let\originalright\right
\renewcommand{\left}{\mathopen{}\mathclose\bgroup\originalleft}
\renewcommand{\right}{\aftergroup\egroup\originalright}
\numberwithin{equation}{section}
\begin{document}

\author{Sandro Bettin}
\title{On the distribution of a cotangent sum}
\date{}
\address{Sandro Bettin --  Centre de Recherches Math\'{e}matiques - Universit\'{e} de Montr\'{e}al, P.O. Box 6128, Centre-ville Station, Montr\'{e}al, QC, H3C 3J7, Canada}

\maketitle

\begin{abstract}
Maier and Rassias computed the moments and proved a distribution result for the cotangent sum $\cc_0\pr{ a/q}:=-\sum_{m<q}\frac mq\cot\pr{\frac{\pi  ma}{q}}$ on average over  $1/2<A_0\leq a/q<A_1<1$, as $q\rightarrow \infty$. We give a simple argument that recovers their results (with stronger error terms)
 and extends them to the full range $1\leq a<q$. Moreover, we give a density result for $\cc_0$ and answer a question posed by Maier and Rassias on the growth of the moments of $\cc_0$.
\end{abstract}

\section{Introduction}

In this note, we consider the cotangent sum
\est{
\cc_0\pr{ a/q}:=-\sum_{m=1}^{q-1}\frac mq\cot\pr{\frac{\pi  ma}{q}},\qquad (a,q)=1,q\geq1,
}
which is related to the Nyman-Beurling criterion for the Riemann hypothesis (see, for example,~\cite{Bag,BC}) and was recently studied in~\cite{BC} and~\cite{MR}. In~\cite{BC}, Conrey and the author proved that $\cc_0$ satisfies the reciprocity formula
\es{\label{rf}
\cc_0(a/q)+q/a\cc_0(q/a)-(\pi q)^{-1} = \psi(a/q),
}
where $\psi(x)$ is an analytic function in $\C\setminus\R_{ \leq 0}$ and satisfies several nice properties. In particular, it satisfies the three term relation $\psi(x)=\psi(x+1)+(x+1)^{-1}\psi(x/(x+1))$ and has an asymptotic expansion for $x\rightarrow0$, starting by
\es{\label{aps}
\psi(x)=-\frac{\log (2\pi x)-\gamma}{\pi x}+O(\log x).
}

Ishibashi~\cite{Ish} observed that $\cc_0$ is also related to the value at $s=0$, or $s=1$ by the functional equation, of the (``imaginary part'' of the) Estermann function:
\es{\label{sf}
\cc_0\pr{ a/q}=\frac12D_{\sin}(0, a/q)=2q\pi^{-2}D_{\sin}(1,\overline a/q),
}
where for $x\in\R$, $\Re(s)>1$,
\est{
D_{\sin}(s,x):=\sum_{n=1}^\infty\frac{d(n)\sin(2\pi nx)}{n^s}
}
with $d(n)$ indicating the divisor function. If $x\in\Q$, then $D_{\sin}(s,x)$ can be extended to an entire function of $s$ satisfying the functional equation
\es{\label{fe}
\Lambda_{\sin}\pr{s,a/q}:=\Gamma\pr{\tfrac {1+s}2}^2 \pr{{q}/{\pi}}^{s} D_{\sin}\pr{s, a/q}
=\Lambda_{\sin}\pr{1-s, {\overline a }/q},\\
}
where $\overline a$ denotes the inverse of $a$ modulo the denominator $q$.

If $x\in\R\setminus\Q$, then de la Bret\`eche and Tenenbaum~\cite{dBT} showed that the convergence of the series at $s=1$ is equivalent to the convergence of $ \sum_{n\geq1} (-1)^n{\log v_{n+1}}/{v_n},$
where $u_n/v_n$ denotes the $n$-th partial quotient of $x$. Moreover, they also showed that for $x\not\in\Q$
\est{
D_{\sin}(1,x):=\sum_{n=1}^\infty \frac{d(n)\sin(2\pi nx)}{n}=\pi\sum_{n=1}^\infty \frac{\tfrac12-\{nx\}}{n},
}
whenever one of the two series converges, and where $\{x\}$ denotes the fractional part of $x$.

Recently, in a difficult paper Maier and Rassias~\cite{MR} computed the moments of $\cc_0\pr{ a/q}$ proving that
\es{\label{mrmo}
\frac1{\varphi(q)}\sum_{\substack{(a,q)=1,\\A_0<{a}/{q}<A_1}}\cc_0\pr{ a/q}^{k}=H_kq^k(1+o(1))
}
as $q\rightarrow\infty$, for certain constants $H_k$ and any fixed $\frac12<A_0<A_1<1$ and where $\phi(q)$ is Euler's function,. They also computed the distribution of $\tfrac1q\cc_0\pr{ a/q}$ and proved that 
\es{\label{fvfsdx}
\frac1{(A_1-A_0)\varphi(q)}\sum_{\substack{(a,q)=1,\\A_0<{a}/{q}<A_1}}f\pr{\tfrac1q\cc_0\pr{ a/q}}=(1+o(1))\int_{\R}f(x)dF(x),
}
as $q\rightarrow\infty$ for any continuous function of compact support $f(x)$ and where $F(x)$ is the continuous (as it is shown in the same paper) function defined by
\est{
F(x):=\tn{meas}\pr{\{z\in [0,1]\mid 2\pi^{-2}D(1,z)\leq x\}}.
}
\begin{figure}[!!h]
\begin{subfigure}[b]{0.5\textwidth}
\centering
\includegraphics[width=190pt]{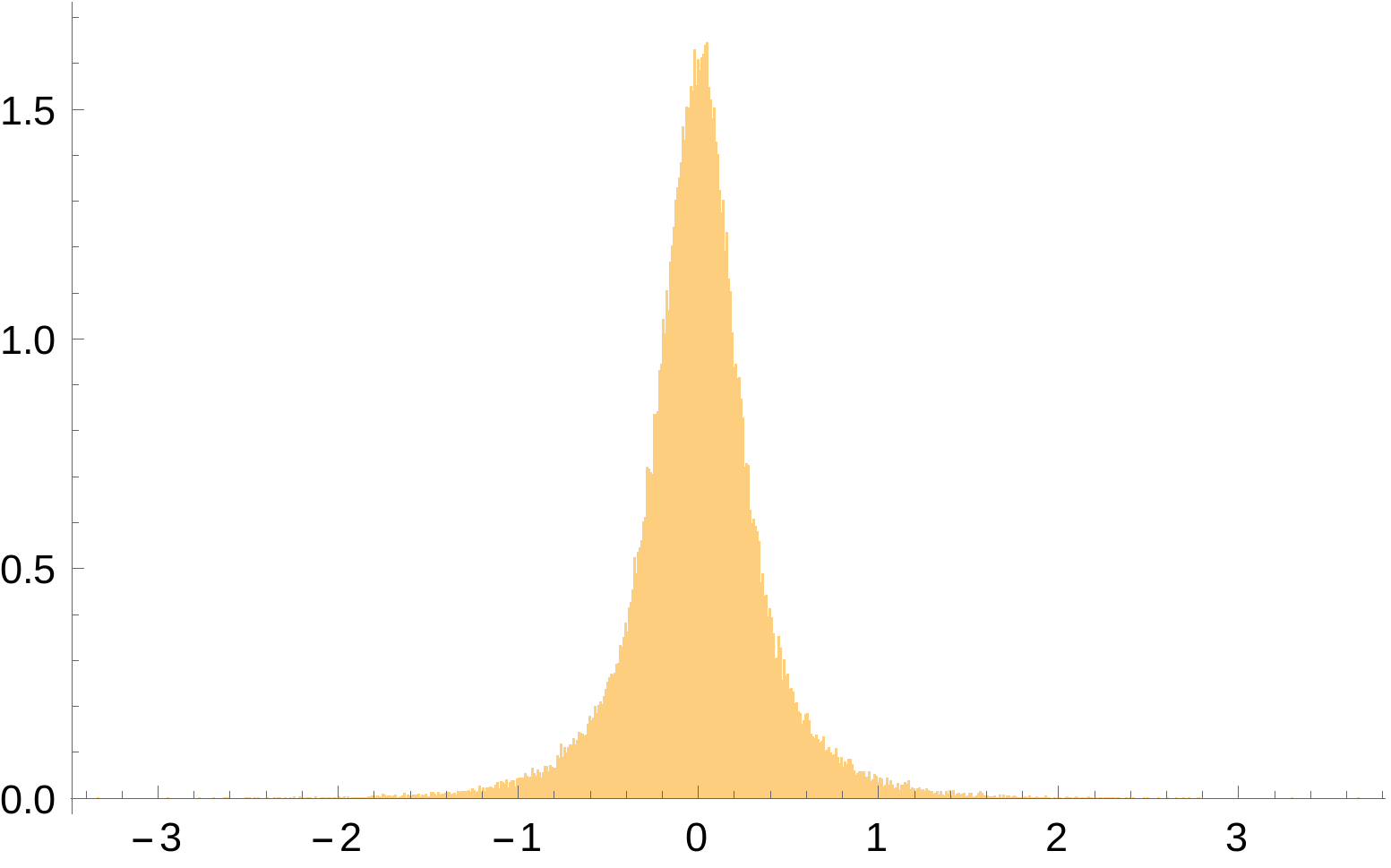}
\end{subfigure}%
\begin{subfigure}[b]{0.5\textwidth}
\centering
\includegraphics[width=190pt]{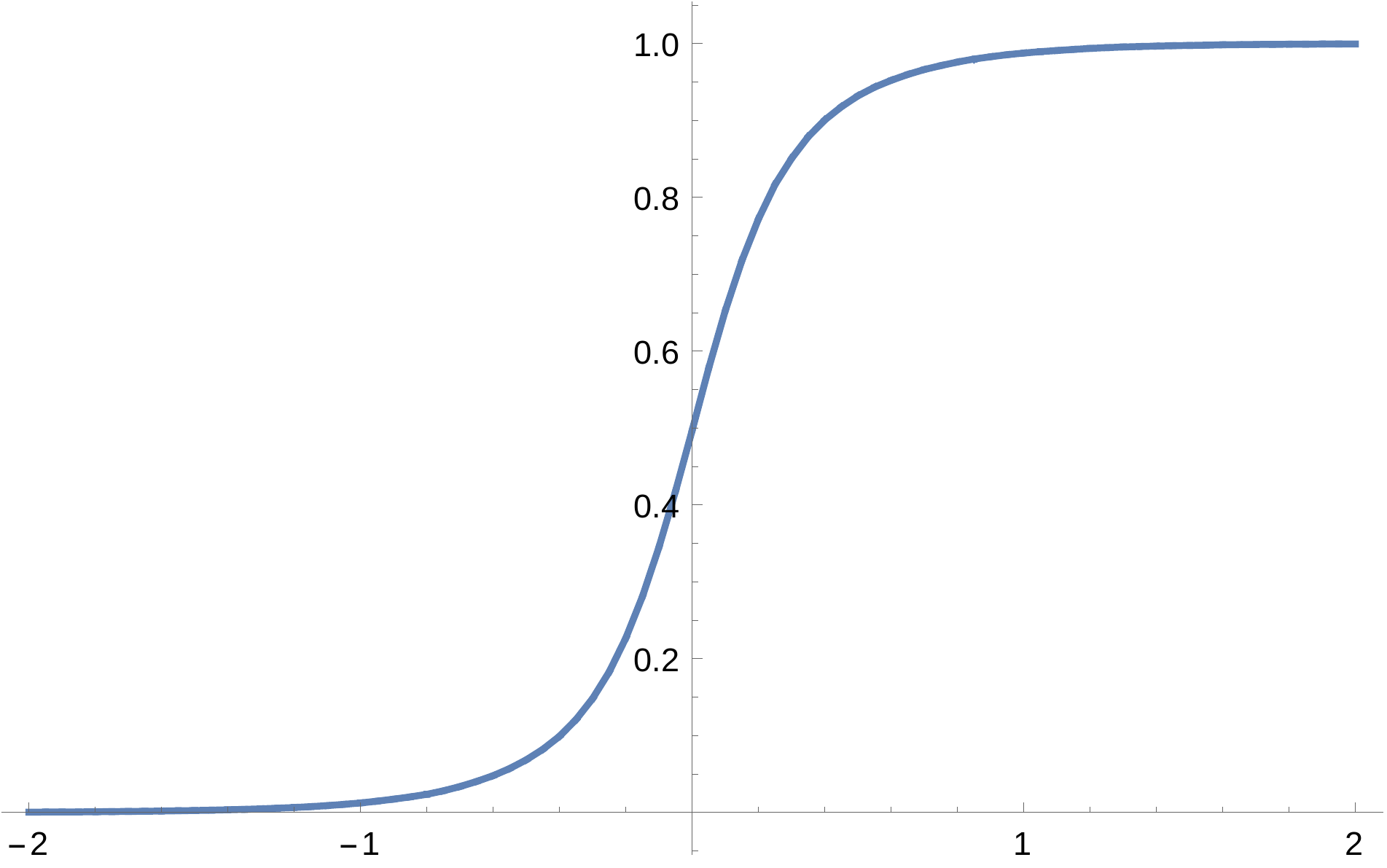}
\end{subfigure}%
\caption{The histogram of $2\pi^{-2}D(1,x)$ and the graph of $F(x)$ obtained by sampling $D(1,x)$ (truncated at $n\leq 10^5$) at $10^5$ points chosen uniformly in $[0,1]$.}
\end{figure}

In this note we extend the results of~\cite{MR} to the full range $A_0=0,A_1=1$. Moreover, since we are averaging over the full range, our results immediately give moments and distribution also for $D(1,a/q)$ or, equivalently, for the Vasyunin sum $V(a,q):=-\cc_0(\overline a/q)$.

Our method, with the orthogonality for additive characters replaced by Weil's bound (as in Lemma~8 of~\cite{DFI}), gives also the case when $A_0\neq0,A_1\neq1$, thus providing a new simpler proof of the results of Maier and Rassias with stronger bounds for the error terms. In particular, we obtain~\eqref{mrmo} with the error term $o(1)$ replaced by $O_\eps(q^{k-\frac12+\eps}(Ak\log q)^{2k})$, allowing us to handle the case of short intervals $[A_0,A_1]$ with $A_1-A_0\gg q^{-\frac12+\delta}$ for some $\delta>0$. However, since the details are identical, we limit ourselves to the full range $A_0=0,A_1=1$.

\begin{theorem}\label{cstare}
Let $q\geq1$ and $k\geq0$. Then, 
\es{\label{fvfsdx2}
\frac1{\varphi(q)}\sum_{\substack{a=1,\\(a,q)=1}}^{q}\cc_0\pr{ a/q}^{k}=H_kq^k+O_\eps(q^{k-1+\eps}(Ak\log q)^{2k}),
}
for some absolute constant $A>0$ and any $\eps>0$, where 
\est{
H_k:=(i\pi^{2})^{-k}\hspace{-1em}\sum_{\substack{(n_1,\dots,n_k)\in(\Z_{\neq0})^{k},\\n_1+\cdots+n_k= 0}} \frac{d(|n_1|)\cdots d(|n_k|)}{n_1\cdots n_k}.
}
\end{theorem}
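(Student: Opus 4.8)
My plan is to convert the moment into an additive-character average and evaluate it by orthogonality. By Ishibashi's identity~\eqref{sf}, $\cc_0(a/q)=2q\pi^{-2}D_{\sin}(1,\overline a/q)$; writing the sine in exponentials and folding in the negative indices gives the (only conditionally convergent) expansion
\[
\cc_0(a/q)=\frac{q}{i\pi^{2}}\sum_{n\neq 0}\frac{d(|n|)}{n}\,\e{n\overline a/q}.
\]
To legitimise the subsequent manipulations I would first replace this by a smoothly truncated sum at a level $T=q^{C}$: taking a smooth $V$ with $V\equiv 1$ near $0$ and writing $\sum_n \frac{d(|n|)}{n}V(n/T)\e{n\overline a/q}$ as a Mellin integral of $D_{\sin}(1+w,\overline a/q)$, I shift the contour far to the left and invoke the functional equation~\eqref{fe} together with the polynomial growth of $\Lambda_{\sin}$ on vertical lines. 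Since the Mellin transform of $V$ decays rapidly and the $q$-dependence produced by~\eqref{fe} is only polynomial, the difference is $\ll_{C,R}q^{-R}$ for any $R$, uniformly in $a$. Thus $\cc_0(a/q)=\frac{q}{i\pi^{2}}\sum_{0<|n|\le T}\frac{d(|n|)}{n}\e{n\overline a/q}+O(q^{-R})$ pointwise, so (using the crude bound $|\cc_0(a/q)|\ll q^{1+\eps}$) replacing $\cc_0$ by this finite sum changes the $k$-th moment by a negligible $O(q^{k-2})$ once $R$ is chosen large.

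For odd $k$ there is nothing to do: since $\cot(\pi m(q-a)/q)=-\cot(\pi ma/q)$ we have $\cc_0((q-a)/q)=-\cc_0(a/q)$, and as $a\mapsto q-a$ permutes the units modulo $q$ without fixed points (for $q\geq3$), the multiset $\{\cc_0(a/q)\}_{(a,q)=1}$ is symmetric about the origin, so all odd moments vanish exactly and $H_k=0$ by the matching $n_j\mapsto -n_j$ antisymmetry of its defining sum. So assume $k$ even. Raising the truncated expansion to the $k$-th power, averaging, interchanging the (now finite) sums, and using orthogonality of additive characters gives, with $N:=n_1+\cdots+n_k$ and $c_q(N):=\sum_{(b,q)=1}\e{Nb/q}$ the Ramanujan sum (via $a\mapsto\overline a$ permuting the units),
\[
\frac1{\varphi(q)}\sum_{(a,q)=1}\cc_0(a/q)^{k}=\Big(\frac{q}{i\pi^{2}}\Big)^{k}\sum_{0<|n_j|\le T}\frac{d(|n_1|)\cdots d(|n_k|)}{n_1\cdots n_k}\,\frac{c_q(N)}{\varphi(q)}+O(q^{k-2}).
\]
The diagonal $N=0$ has $c_q(0)=\varphi(q)$ and so contributes exactly $q^{k}(i\pi^2)^{-k}g^{(k)}_0(T)$, where $g^{(k)}_N(T):=\sum_{\sum n_j=N,\,|n_j|\le T}\prod_j d(|n_j|)/n_j$.

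The technical core, on which both the main term and the error depend, is a uniform bound for these truncated additive convolutions. Using the recursion $g^{(k)}_N(T)=\sum_{0<|n_k|\le T}\frac{d(|n_k|)}{n_k}\,g^{(k-1)}_{N-n_k}(T)$, I would prove by induction on $k$, with summation by parts to exploit the oddness (hence the cancellation) of the weight $d(|n|)/n$, a bound of the shape
\[
|g^{(k)}_N(T)|\ll \frac{(Ak\log q)^{2k}}{\max(|N|,1)}\qquad(N\neq0),
\]
together with the statement that $g^{(k)}_0(T)$ converges as $T\to\infty$ to $(i\pi^2)^{k}H_k$, the defining series of $H_k$ being only conditionally convergent for $k\geq3$ and summed symmetrically, with $|g^{(k)}_0(T)-(i\pi^2)^kH_k|\ll q^{-1}(Ak\log q)^{2k}$. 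This is the main obstacle: securing the $1/|N|$ decay uniformly in $T$ while keeping the dependence on $k$ down to $(Ak)^{2k}$, the $(\log q)^{2k}$ reflecting the $k$ sums $\sum_{n\le T}d(n)/n\asymp(\log T)^2$. Granting it, the diagonal gives $H_kq^k+O(q^{k-1}(Ak\log q)^{2k})$, while the off-diagonal ($N\neq0$), grouped by the value of $N$ and estimated with $|c_q(N)|\le (N,q)$ and $\varphi(q)\gg q^{1-\eps}$, is
\[
\ll \frac{q^{k}}{q^{1-\eps}}(Ak\log q)^{2k}\sum_{0<|N|\le kT}\frac{(N,q)}{|N|}\ll q^{k-1+\eps}(Ak\log q)^{2k},
\]
since $\sum_{0<|N|\le kT}(N,q)/|N|\ll d(q)\log(kT)\ll_\eps q^{\eps}$. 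Collecting the diagonal main term with these error contributions yields~\eqref{fvfsdx2}.
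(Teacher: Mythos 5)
Your overall architecture is the same as the paper's: reduce to $D_{\sin}(1,\cdot)$ via \eqref{sf}, truncate at length $T=q^{O(1)}$ by Mellin inversion together with the functional equation \eqref{fe} (your contour shift far to the left, giving $O(q^{-R})$, is fine and even slightly stronger than the paper's shift to $\Re s=-1-\eps_1$), expand the $k$-th power, and average by orthogonality; your Ramanujan sums are the paper's M\"obius inversion in disguise, since $c_q(N)=\sum_{\ell\mid (N,q)}\mu(q/\ell)\,\ell$. The odd-$k$ symmetry argument is also fine. The genuine gap sits exactly at what you call the technical core: the estimates $|g^{(k)}_N(T)|\ll (Ak\log q)^{2k}/|N|$ and $|g^{(k)}_0(T)-(i\pi^2)^{k}H_k|\ll q^{-1}(Ak\log q)^{2k}$ are asserted, not proved, and the first is false as stated: already for $k=1$ one has $g^{(1)}_N(T)=d(|N|)/N$, and $d(|N|)$ exceeds any fixed power of $\log q$ for suitable $|N|\leq T=q^{C}$, so a factor $q^{\eps}$ (from the divisor function of the constrained variable) is unavoidable --- harmless, since the target error is $q^{k-1+\eps}$, but it means the lemma must be restated. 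Moreover your proposed route to it --- induction with summation by parts ``to exploit the oddness (hence the cancellation) of the weight $d(|n|)/n$'', with the $H_k$-series treated as only conditionally convergent for $k\geq3$ and summed symmetrically --- misdiagnoses the problem: no cancellation is needed, and the diagonal series in fact converges absolutely.

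The elementary repair, which is the paper's device, is to pin the largest variable and use absolute values throughout. If $n_1+\cdots+n_k=N\neq0$ with all $|n_j|\leq T$, then $\max_j|n_j|\geq |N|/k$; choosing $m$ with $|n_m|$ maximal, $n_m$ is determined by the remaining variables and contributes $d(|n_m|)/|n_m|\ll_\eps kT^{\eps}/|N|$, while the $k-1$ free variables contribute $\ll (A\log T)^{2(k-1)}$, whence $|g^{(k)}_N(T)|\ll_\eps k^{2}T^{\eps}(A\log T)^{2(k-1)}/|N|$; combined with your (correct) bounds $|c_q(N)|\leq (N,q)$ and $\sum_{0<|N|\leq kT}(N,q)/|N|\ll_\eps q^{\eps}$, this settles the off-diagonal. (The paper phrases the same point through the congruence $n_1+\cdots+n_k\equiv 0 \bmod \ell$ for $\ell\mid q$, bounding the harmonic sum of the pinned variable over an arithmetic progression by $(k+\log X)/\ell$; either bookkeeping works.) For the diagonal rate, the paper again pins the largest variable: if $|n_1|$ is maximal and exceeds $X$, then $|n_1|\geq X^{1/2}|n_2\cdots n_k|^{1/(2(k-1))}$, so the tail is $O_\eps((Ak)^{2k}X^{-1/2+\eps})$; this both proves the absolute convergence of the series defining $H_k$ (contrary to your remark) and, with $T=q^{C}$ and $C\geq3$, yields a rate stronger than the $q^{-1}$ you need. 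With these two estimates supplied, and your $1/|N|$ claim weakened by $q^{\eps}$, your argument closes and is essentially the paper's proof.
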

\begin{remark}
If $k$ is odd, then both $H_k$ and the left hand side of~\eqref{fvfsdx2} are identically zero. 
\end{remark}

In the same paper Maier and Rassias ask whether $\sum_{k=0}^\infty \frac{H_kt^k}{k!}$ has a positive radius of convergence. The following Theorem answers their question in the affirmative.

\begin{theorem}\label{cbg}
We have $H_k\ll A^k k!$ for some $A>0$.
\end{theorem}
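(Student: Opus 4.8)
The plan is to recognize $H_k$ as the $k$-th moment of the function $2\pi^{-2}D_{\sin}(1,\cdot)$ on $[0,1]$ and then to extract the factorial growth from an exponential tail bound. I would detect the constraint $n_1+\cdots+n_k=0$ by orthogonality, $\int_0^1\e{(n_1+\cdots+n_k)\alpha}\,d\alpha=\mathbf 1[\,n_1+\cdots+n_k=0\,]$, which turns the defining series into
\[
H_k=(i\pi^2)^{-k}\int_0^1\Big(\sum_{n\neq0}\frac{d(|n|)}{n}\e{n\alpha}\Big)^{k}d\alpha=\Big(\frac{2}{\pi^2}\Big)^{k}\int_0^1 D_{\sin}(1,\alpha)^{k}\,d\alpha,
\]
since $\sum_{n\neq0}\frac{d(|n|)}{n}\e{n\alpha}=2i\sum_{n\geq1}\frac{d(n)}{n}\sin(2\pi n\alpha)=2iD_{\sin}(1,\alpha)$. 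As the inner series is only conditionally convergent I would run this computation for the truncation $f_N(\alpha)=\sum_{0<|n|\le N}\frac{d(|n|)}{n}\e{n\alpha}$, for which $\int_0^1 f_N(\alpha)^{k}\,d\alpha$ is the partial sum, over $0<|n_i|\le N$, of the series $(i\pi^2)^{k}H_k$, and then pass to the limit once uniform integrability is available.

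Granting the representation, $H_k\ll A^k k!$ reduces, via the layer-cake formula
\[
\int_0^1|D_{\sin}(1,\alpha)|^{k}\,d\alpha=k\int_0^{\infty}V^{k-1}\mu(V)\,dV,\qquad \mu(V):=\tn{meas}\{\alpha\in[0,1]:|D_{\sin}(1,\alpha)|>V\},
\]
to the exponential tail estimate $\mu(V)\ll e^{-cV}$ for some $c>0$: this gives $\int_0^1|D_{\sin}(1,\alpha)|^{k}\,d\alpha\ll k\,c^{-k}\Gamma(k)=c^{-k}k!$ and hence $H_k\ll(2/(c\pi^2))^{k}k!$.

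The crux, and the step I expect to be the main obstacle, is the tail bound $\mu(V)\ll e^{-cV}$. Large values of $D_{\sin}(1,\alpha)$ occur only for $\alpha$ close to a rational $p/q$: writing $\alpha=p/q+\beta$ and isolating in the sawtooth representation $D_{\sin}(1,\alpha)=\pi\sum_{n\geq1}\frac{1/2-\{n\alpha\}}{n}$ of de la Bret\`eche--Tenenbaum the frequencies $n\equiv0\pmod q$ (for which $\{n\alpha\}=\{(n/q)\,q\beta\}$), the function splits near $p/q$ into a singular part $\sim\frac{\pi}{2q}\log\frac1{q|\beta|}$ plus a regular part organizing into the value $D_{\sin}(1,p/q)=\frac{\pi^2}{2q}\cc_0(\overline p/q)$ supplied by \eqref{sf}. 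The singular part exceeds $V$ only on an interval of length $\ll q^{-1}e^{-2qV/\pi}$ about $p/q$, while by the reciprocity formula \eqref{rf} and the asymptotic \eqref{aps} one has $|\cc_0(\overline p/q)|\ll q\log q$, so the regular value can reach $V$ only when $q\gg e^{cV}$. Summing the resulting interval lengths over all reduced $p/q$—the factor $\varphi(q)\le q$ from the choice of $p$ cancelling the $q^{-1}$, and the geometric decay in $q$ making the endpoints $q=1$ (i.e.\ $\alpha$ near $0$ or $1$) dominant—yields $\mu(V)\ll e^{-cV}$.

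Finally I would close the two gaps left above. To pass from $f_N$ to $D_{\sin}(1,\cdot)$ under the $k$-th power I would establish the tail bound uniformly in $N$: truncation only alters frequencies $n>N$ and perturbs the singular logarithm boundedly, so the same local analysis applies to $f_N$ and furnishes the uniform integrability of $|f_N|^{k}$ needed for $\int_0^1 f_N^{k}\to(2i)^k\int_0^1 D_{\sin}(1,\alpha)^{k}$. (Alternatively, if the exponential integrability $\int_0^1 e^{t|D_{\sin}(1,\alpha)|}\,d\alpha<\infty$ for small $t>0$ is quoted from the analysis of \cite{dBT}, the last three steps collapse to a single appeal to the layer-cake formula.) The genuine difficulty is to make the local expansion near $p/q$ uniform in $p,q$ and to bound the contribution of the frequencies $n\not\equiv0\pmod q$ by $O(1/q)$ on the relevant intervals; once this is done the sum over rationals converges and the factorial bound follows.
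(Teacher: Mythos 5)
Your outer reduction coincides with the paper's: the orthogonality step giving $H_k=(2/\pi^{2})^{k}\int_0^1 D_{\sin}(1,x)^{k}\,dx$ via truncations, and the passage from an exponential tail bound to $\int_0^1|D_{\sin}(1,x)|^{k}dx\ll c^{-k}k!$ (the paper slices with characteristic functions $\chi_L$ of $\{L-1\leq S(x)\leq L\}$ rather than using the layer-cake integral, but this is the same computation). The constants check out, and the dominated-convergence concern you raise is the right one.

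The genuine gap is in the crux you yourself flag: the tail bound $\mu(V)\ll e^{-cV}$. Your single-scale decomposition near the nearest rational --- singular part $\sim\frac{\pi}{2q}\log\frac{1}{q|\beta|}$ plus a ``regular part organizing into $D_{\sin}(1,p/q)$'' --- is not valid uniformly: the frequencies $n\not\equiv0\pmod q$ do not settle to the value at $p/q$ with a controllable error, because the remainder has its own logarithmic singularities at the next convergent, recursively. Consequently $|D_{\sin}(1,x)|$ can exceed $V$ through an accumulation $\sum_{\ell}\log v_{\ell+1}/v_\ell$ of moderate contributions across many scales, none of which trips your singular-part threshold, and your sum of interval lengths over reduced fractions never sees these sets. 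Moreover, for the ``regular value large'' case you only observe that it forces $q\gg e^{cV}$ (a bound which itself needs the reciprocity formula iterated along the continued fraction, not a single application of \eqref{rf} and \eqref{aps}); but every $x$ has convergents of arbitrarily large denominator, so this observation yields no measure estimate by itself --- what is needed is a statement about the whole continued-fraction trajectory of $x$. The paper resolves precisely this: iterating \eqref{rf} gives the exact multi-scale expansion (Proposition~\ref{mp}, Lemmas~\ref{caf} and~\ref{fflrm}), whence the uniform majorant $|D_X(1,x)|\ll S(x):=\sum_n \log v_{n+1}/v_n$ for all truncations simultaneously (this also settles your uniform-in-$N$ issue in one stroke, using the estimates (11.1), (11.4) of \cite{dBT}), and then the tail bound $\tn{meas}\pg{x:S(x)>K}\ll e^{-\delta K}$ follows from a refinement of Khinchin's Theorem~31 (Lemma~\ref{afra}): a union bound over all $r$ of the events $v_r\geq Ke^{Br}$, combined with the universal lower bound $v_n\geq 2^{(n-3)/2}$. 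Any rigorous completion of your local analysis would be forced into this continued-fraction, all-scales-at-once framework; as written, the main estimate is asserted rather than proven.
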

Since  $\sum_{k=0}^\infty H_k t^k/k!$ has a positive radius of convergence, we immediately obtain the distribution of $\cc_0$ over the full range.
\begin{corol}\label{mcrm}
Let $q\geq1$ and let $f(x)$ be a continuous function with compact support. Then, as $q\rightarrow\infty$ we have
\est{
\frac1{\varphi(q)}\sum_{\substack{a=1,\\(a,q)=1}}^{q}f\pr{\tfrac{1}{q}\cc_0\pr{\tfrac aq}}=(1+o(1))\int_{\R}f(x)dF(x).
}
\end{corol}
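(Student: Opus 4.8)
The plan is to derive Corollary~\ref{mcrm} from Theorems~\ref{cstare} and~\ref{cbg} by the method of moments. For each $q$ let $\mu_q$ be the probability measure on $\R$ placing mass $1/\varphi(q)$ at each of the points $\frac1q\cc_0(a/q)$ with $1\le a\le q$ and $(a,q)=1$; equivalently, $\mu_q$ is the law of $\frac1q\cc_0(a/q)$ when $a$ is drawn uniformly among the residues coprime to $q$, so that the left-hand side of the corollary is precisely $\int_\R f\,d\mu_q$. Dividing~\eqref{fvfsdx2} by $q^k$, Theorem~\ref{cstare} says that the $k$-th moment of $\mu_q$ equals $H_k+O_\eps(q^{-1+\eps}(Ak\log q)^{2k})$, so for every fixed $k\ge0$ the moments of $\mu_q$ converge to $H_k$ as $q\to\infty$.

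I would first verify that $(H_k)_{k\ge0}$ is a determinate Hamburger moment sequence. Being a pointwise limit of the genuine moment sequences of the probability measures $\mu_q$, the $H_k$ are themselves the moments of some probability measure $\mu$ (the associated Hankel forms are positive semidefinite, as limits of such), so $\mu$ exists. Theorem~\ref{cbg} gives $H_{2k}\ll A^{2k}(2k)!$, whence $H_{2k}^{1/(2k)}\ll k$ by Stirling and therefore $\sum_{k\ge1}H_{2k}^{-1/(2k)}=\infty$; by Carleman's criterion $\mu$ is the unique probability measure with moments $H_k$. The Fr\'echet--Shohat theorem (convergence of all moments to those of a determinate measure) then yields the weak convergence $\mu_q\Rightarrow\mu$.

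It remains to identify $\mu$ with $dF$, for which it suffices to show that $dF$ has moments $H_k$, a determinate measure being unique. Using~\eqref{sf} one writes $2\pi^{-2}D_{\sin}(1,z)=(i\pi^2)^{-1}\sum_{m\neq0}\frac{d(|m|)}{m}e^{2\pi imz}$; raising this to the $k$-th power and integrating over $z\in[0,1]$ annihilates every frequency except those with $n_1+\cdots+n_k=0$, giving $\int_0^1\bigl(2\pi^{-2}D_{\sin}(1,z)\bigr)^k\,dz=H_k$, which is the $k$-th moment of $F$. Hence $dF=\mu$. Since a continuous compactly supported $f$ is bounded and continuous, the weak convergence $\mu_q\Rightarrow\mu=dF$ gives $\int_\R f\,d\mu_q\to\int_\R f\,dF$, which is the assertion.

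The crux of the argument is the passage from convergence of moments to weak convergence: this is legitimate only because the limiting moment problem is determinate, which is exactly what Theorem~\ref{cbg} supplies through Carleman's condition; without a bound of the shape $H_k\ll A^kk!$ the sequence $(H_k)$ could be the moment sequence of several distinct laws and nothing would follow. The one genuinely delicate analytic point is the evaluation $\int_0^1(2\pi^{-2}D_{\sin}(1,z))^k\,dz=H_k$, which requires term-by-term integration of the only conditionally convergent Fourier series for $D_{\sin}(1,z)$; this is handled by truncating the series and passing to the limit, using the absolute convergence of the multiple divisor sum defining $H_k$ (established in the course of proving Theorem~\ref{cbg}) together with the $L^2$-summability of the coefficients $d(|m|)/m$ to control the tails. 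I would rely on the abstract determinacy argument for the weak convergence itself, so that $F$ is integrated against only to match moments, never in the singular regime near rationals.
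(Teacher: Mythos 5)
Your architecture --- method of moments from Theorem~\ref{cstare}, Carleman determinacy from the bound $H_k\ll A^kk!$, Fr\'echet--Shohat, then identification of the limit with $dF$ by matching moments --- is exactly the paper's route, and the soft parts (Hankel positivity giving existence of $\mu$, $H_{2k}^{1/(2k)}\ll k$ giving determinacy, weak convergence against bounded continuous $f$) are fine. But there is a genuine gap at the step you yourself flag as delicate: the identity $\int_0^1\bigl(2\pi^{-2}D_{\sin}(1,z)\bigr)^k\,dz=H_k$. Sharp truncation plus orthogonality does give $\int_0^1 D_X(1,z)^k\,dz\rightarrow \frac{\pi^{2k}}{2^k}H_k$ as $X\rightarrow\infty$ (the diagonal sum defining $H_k$ does converge absolutely, as in the diagonal estimate in the proof of Theorem~\ref{cstare}), but passing from $\int_0^1 D_X(1,z)^k\,dz$ to $\int_0^1 D(1,z)^k\,dz$ requires uniform integrability of the family $D_X(1,\cdot)^k$, i.e.\ uniform-in-$X$ control in $L^p$ for $p$ up to $k$. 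The ``$L^2$-summability of the coefficients $d(|m|)/m$'' you invoke yields only $D_X\rightarrow D$ in $L^2$, which settles $k\leq 2$ and nothing more: for $k\geq3$, almost-everywhere convergence together with $L^2$ convergence does not imply convergence of $k$-th moments. Worse, your argument does not even establish that $dF$ has finite moments of all orders: $D(1,\cdot)$ is unbounded, and the finiteness of $\int_0^1|D(1,z)|^k\,dz$ is a statement of the same depth as Theorem~\ref{cbg} itself, which as stated bounds only the combinatorial quantity $H_k$ and says nothing about the measure $dF$. Your closing remark that $F$ is ``integrated against only to match moments, never in the singular regime near rationals'' does not help --- matching moments is precisely where the singular behaviour enters, since the moments of $dF$ are integrals of powers of this unbounded function.

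This missing input is what most of Section~3 of the paper exists to supply. The reciprocity formula~\eqref{rf}, via Proposition~\ref{mp} and Lemma~\ref{fflrm}, gives the uniform pointwise majorant $D_X(1,x)\ll S(x)$ for all $X\geq2$ and $x\in[0,1]\setminus\Q$, where $S(x)=\sum_{n\geq1}\log v_{n+1}/v_n$ is built from the continued fraction of $x$; the Khinchin-type estimates (Lemmas~\ref{fk} and~\ref{afra}) give the exponential tail bound~\eqref{mwq}, $\tn{meas}\{x\in[0,1]: S(x)>K\}\ll e^{-\delta K}$, whence $\int_0^1 S(x)^k\,dx\ll A^kk!$. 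Dominated convergence with the dominating function $S(x)^k$ then simultaneously legitimates $\lim_{X}\int_0^1D_X(1,z)^k\,dz=\int_0^1D(1,z)^k\,dz$ and proves $H_k\ll A^kk!$: in the paper, Theorem~\ref{cbg} and the identification of $H_k$ as the $k$-th moment of $dF$ are one and the same computation. Since your proposal uses Theorem~\ref{cbg} only as a numerical bound on $H_k$ and attempts the integral identity by soft truncation, the continued-fraction majorant (or some substitute such as a uniform $L^{k}$ bound on $D_X(1,\cdot)$) is genuinely absent, and as written the proof does not close for $k\geq3$.
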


Using~\eqref{rf}, we can also give an alternative expression for $D_{\sin}(1,x)$ in terms of the denominators of the partial quotient of $x$.  
\begin{prop}\label{mp}
Let $\pa{a_0;a_1,a_2,\dots}$ be the continued fraction expansion of $x\in\R$. Moreover, let $u_r/v_r$ be the $r$-th partial quotient of $x$. Then
\es{\label{csa}
D_{\sin}(1,x):=\sum_{n}\frac{d(n)\sin(2\pi nx)}{n}
=-\frac{\pi^2}2\sum_{\ell=1}^{\infty}\frac{(-1)^\ell}{v_\ell}\pr{\frac1{\pi v_{\ell}}+\psi\pr{\frac{v_{\ell-1}}{v_\ell}}},
}
whenever either of the two series is convergent. 

If $x=\pa{a_0;a_1,\dots,a_r}$ is a rational number, then  the range of summation of the series on the right is to be interpreted to be $1\leq \ell\leq r$.
\end{prop}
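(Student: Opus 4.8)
The plan is to read the right-hand side of~\eqref{csa} through the reciprocity formula~\eqref{rf} together with the Estermann relation~\eqref{sf}, and to show that its partial sums telescope to $D_{\sin}(1,\cdot)$ evaluated at the convergents $u_N/v_N$, after which the result follows by letting $N\to\infty$.

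First I would rewrite each summand using~\eqref{sf} in the form $\frac{\pi^2}{2v_\ell}\cc_0\pr{v_{\ell-1}/v_\ell}=D_{\sin}\pr{1,\overline{v_{\ell-1}}/v_\ell}$, where the overline denotes the inverse modulo $v_\ell$. The standard convergent identity $u_\ell v_{\ell-1}-u_{\ell-1}v_\ell=(-1)^{\ell-1}$ gives $\overline{v_{\ell-1}}\equiv(-1)^{\ell-1}u_\ell\pmod{v_\ell}$, and since $D_{\sin}(1,\cdot)$ is odd and $1$-periodic in its argument this produces the clean relation $\frac{\pi^2}2\cdot\frac{(-1)^\ell}{v_\ell}\cc_0\pr{v_{\ell-1}/v_\ell}=-D_{\sin}(1,u_\ell/v_\ell)$. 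This is the identity that will evaluate the telescoped boundary terms.

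Next I would apply~\eqref{rf} with $a=v_{\ell-1}$, $q=v_\ell$ (coprime, as for all convergents). Combining the recursion $v_\ell=a_\ell v_{\ell-1}+v_{\ell-2}$ with the periodicity $\cc_0(v_\ell/v_{\ell-1})=\cc_0(v_{\ell-2}/v_{\ell-1})$, the reciprocity formula rewrites the $\ell$-th summand as $\frac1{\pi v_\ell}+\psi(v_{\ell-1}/v_\ell)=\cc_0(v_{\ell-1}/v_\ell)+\frac{v_\ell}{v_{\ell-1}}\cc_0(v_{\ell-2}/v_{\ell-1})$, so the $(\pi v_\ell)^{-1}$ terms cancel exactly. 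Substituting this into the partial sum $\Sigma_N$ of the right-hand side of~\eqref{csa} and writing $P_\ell:=\frac{(-1)^\ell}{v_\ell}\cc_0(v_{\ell-1}/v_\ell)$, the reindexing $j=\ell-1$ in the second piece collapses everything to $\Sigma_N=\frac{\pi^2}2(P_0-P_N)$. By the identity from the previous step this equals $D_{\sin}(1,u_N/v_N)-D_{\sin}(1,u_0/v_0)$, and since $u_0/v_0=a_0$ is an integer the second term vanishes, leaving the key formula $\Sigma_N=D_{\sin}(1,u_N/v_N)$.

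The remaining, and main, difficulty is the passage $N\to\infty$. For rational $x=u_r/v_r$ this is immediate, since the series on the right terminates and $\Sigma_r=D_{\sin}(1,x)$ exactly. For irrational $x$ the identity $\Sigma_N=D_{\sin}(1,u_N/v_N)$ shows that the right-hand series of~\eqref{csa} converges precisely when $\lim_N D_{\sin}(1,u_N/v_N)$ exists; to identify this limit with $D_{\sin}(1,x)$ — and thereby match the convergence of the two series asserted in the statement — I would invoke the results of de la Bret\`eche and Tenenbaum~\cite{dBT} on the convergence of $D_{\sin}(1,x)$ along convergents and its equivalence with the convergence of $\sum_n(\tfrac12-\{nx\})/n$. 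This analytic input, rather than the purely formal telescoping, is where the real work lies.
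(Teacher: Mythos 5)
Your telescoping argument is correct, and for the rational case it is essentially the paper's own proof of Lemma~\ref{caf} run in the opposite direction: the paper applies~\eqref{rf} repeatedly down the Euclidean algorithm of $b/q:=(-1)^{r+1}\overline a/q$ (i.e.\ along the reversed continued fraction, using $y_{r+1-\ell}=v_\ell$ and $\cc_0(1)=0$), whereas you telescope up the convergents via $\overline{v_{\ell-1}}\equiv(-1)^{\ell-1}u_\ell\pmod{v_\ell}$. Your bookkeeping is sound (the identity $\tfrac{\pi^2}2 P_\ell=-D_{\sin}(1,u_\ell/v_\ell)$ checks out, as does the cancellation of the $(\pi v_\ell)^{-1}$ terms), and it has the genuine advantage of exhibiting the $N$-th partial sum of the right-hand side of~\eqref{csa} exactly as $D_{\sin}(1,u_N/v_N)$, which disposes of the rational case at once and isolates cleanly what remains for irrational $x$.

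The gap is precisely at the point you flag but then try to discharge by citation: no quotable result of~\cite{dBT} gives ``convergence along convergents,'' i.e.\ that $D_{\sin}(1,u_N/v_N)\rightarrow D_{\sin}(1,x)$ whenever either side makes sense. What~\cite{dBT} provides is the equivalence of convergence of $D_{\sin}(1,x)$ with that of $\sum_n(-1)^n\log v_{n+1}/v_n$, the identity with $\pi\sum_n(\tfrac12-\{nx\})/n$, and partial-sum estimates (their~(11.1) and~(11.4)); none of these identifies $\lim_N D_{\sin}(1,u_N/v_N)$ with $D_{\sin}(1,x)$, and since $D_{\sin}(1,\cdot)$ is wildly discontinuous there is no soft continuity argument to fall back on. Supplying this step is exactly the content of the paper's Lemma~\ref{fflrm}, and it takes real work: one splits $D_X(1,x)$ at $\xi_R=v_R(\log v_R)^B$, bounds the range $\xi_R\leq n\leq X$ by partial summation from the dBT estimates, replaces $x$ by $u_R/v_R$ in the head using $|x-u_R/v_R|\leq(v_Rv_{R+1})^{-1}$, and then completes the sharply truncated sum at the rational point to the full value $D_{\sin}(1,u_R/v_R)$ by a Mellin-transform contour shift combined with the functional equation~\eqref{fe}, the total error being $O\pr{\log(v_{R+1})/v_R+1/\log v_R}$, which tends to $0$ exactly under the dBT convergence criterion. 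A smaller observation: the mere \emph{equivalence} of convergence of the two sides of~\eqref{csa} can be salvaged from your telescoping together with~\eqref{aps} and the bound $v_\ell\gg 2^{\ell/2}$ (the right-hand series differs from a constant multiple of $\sum_\ell(-1)^\ell\log v_\ell/v_{\ell-1}$ by an absolutely convergent series, so it converges if and only if the dBT alternating series does), but the equality of the two limiting \emph{values} cannot be obtained this way, so your proof as it stands establishes the proposition only for rational $x$.
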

\begin{remark}
If $x\in\Q$ then one can write two different continued fraction expansion for $x$, but~\eqref{csa} holds regardless of the chosen expansion.
\end{remark}

Proposition~\ref{mp}, which constitutes a refinement of the aforementioned work of de la Bret\`eche and Tenenbaum, can be interpreted as an extension of the reciprocity formula~\eqref{rf} to $x\notin\Q$. We also remark that Proposition~\ref{mp} is of independent interest as $D(1,a/q)=-\frac{\pi^2}{2q}V(a,q)$ is exactly the sum appearing in the Nyman-Beurling criterion for the Riemann hypothesis (c.f.~\cite{BC}).

The exact formula~\eqref{csa} allows us to prove the following corollary which, combined with~\eqref{fvfsdx} and the periodicity modulo 1 of $\cc_{0}$, implies that $\{\big(a/q,\frac1qc_0(a/q)\big) \mid (a,q)=1,q\geq1 \}$ is dense in $\R^2$.

\begin{corol}\label{finc}
The function $F(x)$ is strictly increasing.
\end{corol}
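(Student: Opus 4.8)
For the final statement (Corollary~\ref{finc}), the plan is to translate strict monotonicity into a positivity statement for the distribution and then to realize any prescribed value of $g(z):=2\pi^{-2}D_{\sin}(1,z)$ on a positive-measure set of $z$, using the exact formula~\eqref{csa}. Since $F$ is continuous and non-decreasing, it is strictly increasing if and only if every non-empty open interval $(c,d)$ satisfies $\tn{meas}\pg{z\in[0,1]\mid c<g(z)<d}>0$. Writing $z=\pa{0;a_1,a_2,\dots}$ with convergents $u_\ell/v_\ell$, I would split~\eqref{csa} as $g(z)=S_N(z)+R_N(z)$, where $S_N$ is the sum over $\ell\leq N$ and $R_N$ the tail. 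The key structural point is that $v_1,\dots,v_N$ depend only on $a_1,\dots,a_N$, so $S_N$ is \emph{constant} on each level-$N$ cylinder $\mathcal C(a_1,\dots,a_N)=\pg{z\mid z=\pa{0;a_1,\dots,a_N,\ast}}$, which is an interval of positive length; all of the variation of $g$ on the cylinder is carried by $R_N$.

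The crux is a density statement with a large-denominator constraint: for every $t\in\R$, $\eps>0$ and $V>0$ I would produce a word $(a_1,\dots,a_N)$ with $|S_N-t|<\eps$ and $v_N\geq V$. The mechanism is the asymptotic~\eqref{aps}. Appending a partial quotient $a_N=b$ to a fixed prefix replaces $v_N$ by $b\,v_{N-1}+v_{N-2}$ and changes the value by
\[
T_N(b)=\frac{(-1)^N}{\pi v_{N-1}}\pr{\log\frac{2\pi v_{N-1}}{v_N}-\gamma}+O\!\pr{\tfrac1{v_{N-1}}}\longrightarrow\mp\infty\quad(b\to\infty),
\]
with consecutive gaps $T_N(b+1)-T_N(b)\ll (v_{N-1}b)^{-1}$. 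Thus a single appended partial quotient sweeps the running value over an entire half-line, its direction governed by the parity of $N$, and with arbitrarily fine spacing once $v_{N-1}$ is large. I would therefore first build a prefix with $v_{N-2}\geq V$ (e.g.\ by inserting a large early partial quotient), then append one term to place $S_{N-1}$ on the far side of $t$, and finally append a term $a_N=b$ to sweep back into $(t-\eps,t+\eps)$; since $v_{N-1}\geq V$ every $b\geq1$ keeps $v_N\geq V$, and the fine spacing guarantees landing within $\eps$. A bounded number of such appends hence meets all three requirements.

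It then remains to see that $R_N$ is negligible on a positive-measure part of the cylinder. Using that $\psi$ is bounded on $[\theta,1]$ and $\psi(x)\ll x^{-1}\log(2/x)$ as $x\to0^+$ by~\eqref{aps}, together with $v_\ell/v_{\ell-1}<a_\ell+1$, I would derive the pointwise bound
\[
|R_N(z)|\ll\sum_{\ell>N}\frac{\log(2+a_\ell)}{v_{\ell-1}}\ll\frac1{v_N}\,\Phi(G^N z),\qquad\Phi(y):=\sum_{j\geq1}\frac{\log(2+b_j)}{w_{j-1}},
\]
where $G$ is the Gauss map, $y=\pa{0;b_1,b_2,\dots}=G^N z$, the $w_j$ are the convergent denominators of $y$, and I have used the continuant bound $v_{N+j-1}\geq v_N w_{j-1}$. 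The geometric growth of $w_{j}$ and $\int_0^1\log(2+b_1(y))\,dy<\infty$ make $\Phi$ finite almost everywhere, so I may fix $K$ with $\tn{meas}\pg{y\mid\Phi(y)\leq K}\geq\tfrac12$. By the bounded distortion of $G^N$ on a cylinder, the set $\pg{z\in\mathcal C(a_1,\dots,a_N)\mid\Phi(G^Nz)\leq K}$ has measure $\gg\tn{meas}\,\mathcal C(a_1,\dots,a_N)>0$. Taking $t=(c+d)/2$, $\eps=(d-c)/4$ and the word of the previous paragraph with $v_N>CK/\eps$, every such $z$ satisfies $g(z)=S_N+R_N(z)\in(t-2\eps,t+2\eps)=(c,d)$, so $(c,d)$ carries positive mass and $F$ is strictly increasing.

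I expect the genuine obstacle to be the second step: arranging that the attainable partial sums $S_N$ are dense in $\R$ \emph{while} forcing $v_N$ arbitrarily large. This is exactly what the logarithmic main term in~\eqref{aps} supplies—one large partial quotient produces an unboundedly large, finely spaced shift of the value and simultaneously blows up the denominator—so the apparent tension between density and large denominators dissolves. The tail estimate of the third step should be routine given the Fibonacci-type growth of the $v_\ell$ and the standard bounded-distortion property of the Gauss map.
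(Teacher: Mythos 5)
Your argument is correct in substance: it produces, for every interval $(c,d)$, a positive-measure set of $z$ with $2\pi^{-2}D_{\sin}(1,z)\in(c,d)$, which is exactly what strict monotonicity of $F$ requires, and it runs on the same engine as the paper, namely the logarithmic blow-up of $\psi$ at $0$ in \eqref{aps} applied to the exact formula \eqref{csa}. But your implementation of the two key steps is genuinely different. For the density step, the paper fixes the prefix $\pa{0;1,\dots,1}$ and tunes \emph{two} free partial quotients at positions $\kappa+1,\kappa+2$: by \eqref{aps} their joint contribution is $\alpha_\kappa\log x_1-\beta_\kappa\log(x_2)/x_1+o(1)$, and the explicit choice $x_2=[\exp\pr{\beta_\kappa^{-1}x_1(\alpha_\kappa\log x_1+\mathcal A-z-\eps/2)}]$ lands the value in $(z,z+\eps]$ in one stroke; your overshoot-and-sweep with one quotient at a time, using $T_N(b)\to(-1)^{N+1}\infty$ with uniformly small steps and a discrete intermediate-value argument, achieves the same while also forcing $v_N\geq V$ (a constraint the paper never needs, see below); note only that the $O(\log x)$ error in \eqref{aps} makes your step size $O((\log b)/(b\,v_{N-1}))$ rather than $O(1/(b\,v_{N-1}))$, which is still $O(1/v_{N-1})$ and suffices, and that the parity bookkeeping for the sweep direction, which you leave implicit, is routine. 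For the positive-measure step, the paper applies its quantitative Borel--Bernstein refinement (Lemma~\ref{afra}) to restrict \emph{all} tail quotients to $a_\ell\leq Ke^{B\ell}$, so the tail $\mathcal C$ is $\leq\eps/10$ uniformly on a positive-measure set and no lower bound on $v_N$ is ever invoked; you instead bound the tail by $v_N^{-1}\Phi(G^Nz)$ via the continuant inequality $v_{N+j-1}\geq v_Nw_{j-1}$ and combine a.e.\ finiteness of $\Phi$ (which needs only Lemma~\ref{fk} and the uniform tail bound $\tn{meas}\pg{y\mid b_j(y)\geq m}\ll 1/m$, or the integrability of $\log(2+b_1)$) with the bounded distortion of the inverse branch of $G^N$ on a full cylinder (a M\"obius map whose derivative varies by a factor at most $4$). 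Your route thus trades Lemma~\ref{afra} for a standard distortion estimate from continued-fraction dynamics and is slightly more self-contained on the measure-theoretic side, while the paper's two-parameter trick is shorter on the density side; both are valid, and your closing observation that one large partial quotient simultaneously supplies density and a large denominator is precisely the mechanism behind the paper's choice of $x_1,x_2$.
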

\begin{corol}\label{den}
We have that $\{\big(a/q,\frac1qc_0(a/q)\big) \mid (a,q)=1,q\geq1 \}$ is dense in $\R^2$.
\end{corol}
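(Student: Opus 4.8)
The plan is to deduce density from the distribution result \eqref{fvfsdx} together with the strict monotonicity of $F$ established in Corollary~\ref{finc}, exploiting two elementary symmetries of the set
\[
S:=\pg{\pr{a/q,\tfrac1q\cc_0\pr{a/q}}\mid (a,q)=1,\ q\geq1}.
\]
First I would record these symmetries. Since $\cot$ is $\pi$-periodic we have $\cc_0\pr{(a+q)/q}=\cc_0\pr{a/q}$, so replacing $a$ by $a+q$ shows that $S$ is invariant under the integer translation $(x,y)\mapsto(x+1,y)$; it therefore suffices to prove that $S$ is dense in $(0,1)\times\R$. Moreover, from $\cot\pr{\pi m(q-a)/q}=-\cot\pr{\pi ma/q}$ one obtains the odd symmetry $\cc_0\pr{(q-a)/q}=-\cc_0\pr{a/q}$, so $S$ is also invariant under the reflection $(x,y)\mapsto(1-x,-y)$. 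This reflection maps $(1/2,1)\times\R$ onto $(0,1/2)\times\R$, so it is enough to prove that $S$ is dense in $(1/2,1)\times\R$: density there, combined with the reflection and the integer translations, yields density in $[0,1]\times\R$ and hence in all of $\R^2$. (Note that periodicity alone would only reach the strips $(n+\tfrac12,n+1)\times\R$, so the reflection is genuinely needed.)

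The core step localizes the first coordinate and uses \eqref{fvfsdx} to force the second coordinate to fill $\R$. Fix $\xi\in(1/2,1)$, $\eta\in\R$ and $\eps>0$ small enough that $(\xi-\eps,\xi+\eps)\subset(1/2,1)$, and set $A_0=\xi-\eps$, $A_1=\xi+\eps$. Choose a continuous, nonnegative, compactly supported $f$ with $f(\eta)=1$ and $\operatorname{supp} f\subset(\eta-\eps,\eta+\eps)$. By Corollary~\ref{finc} the function $F$ is strictly increasing, so the measure $dF$ assigns positive mass to every nonempty open interval; since $f$ is continuous, nonnegative and positive near $\eta$, this gives $\int_\R f\,dF>0$. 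Applying \eqref{fvfsdx} on the fixed interval $(A_0,A_1)$, we obtain, for all sufficiently large $q$,
\[
\sum_{\substack{(a,q)=1\\A_0<a/q<A_1}}f\pr{\tfrac1q\cc_0\pr{a/q}}=(A_1-A_0)\varphi(q)\pr{\int_\R f\,dF+o(1)}>0.
\]
Hence at least one term is positive, producing an $a$ with $(a,q)=1$, $a/q\in(\xi-\eps,\xi+\eps)$ and $\tfrac1q\cc_0\pr{a/q}\in(\eta-\eps,\eta+\eps)$, that is, a point of $S$ within $\eps$ of $(\xi,\eta)$. As $\xi,\eta,\eps$ are arbitrary, $S$ is dense in $(1/2,1)\times\R$, and the reductions of the previous paragraph complete the argument.

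The only genuine input is the strict positivity $\int_\R f\,dF>0$, which is precisely where the strict monotonicity of $F$ (Corollary~\ref{finc}, and thus ultimately the exact formula \eqref{csa} of Proposition~\ref{mp}) is indispensable: if $F$ were constant on some interval, a target $\eta$ falling into that gap could not be approximated and density would fail. Everything else is soft — the two symmetries are elementary, and \eqref{fvfsdx} is invoked only on fixed subintervals of $(1/2,1)$, so no short-interval or quantitative refinement of the distribution result is required.
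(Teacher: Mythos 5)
Your proof is correct and follows essentially the same route the paper intends: the paper never writes out a separate proof of Corollary~\ref{den}, but states in the introduction that it follows from Corollary~\ref{finc} (which makes $dF$ charge every open interval, hence $\int_\R f\,dF>0$) combined with \eqref{fvfsdx} on fixed subintervals of $(1/2,1)$ and the periodicity of $\cc_0$ modulo $1$ --- exactly your core step. Your one genuine addition is the reflection symmetry $\cc_0\pr{(q-a)/q}=-\cc_0\pr{a/q}$: since \eqref{fvfsdx} is stated only for $\tfrac12<A_0<A_1<1$, periodicity alone reaches only the strips $\pr{n+\tfrac12,n+1}\times\R$, and you correctly identify and patch this point, which the paper's one-line deduction glosses over (alternatively, the paper's method via Weil's bound extends \eqref{fvfsdx} to all $0\leq A_0<A_1\leq 1$, making the reflection avoidable but not free).
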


\section*{Acknowledgments}
After sending them a preprint of this paper, the author was informed by Maier and Rassias that they have also obtained independently Theorem 2.1 using a somewhat similar argument.

The author would like to thank Maksym Radziwi\l\l~for useful discussions.

\section{Proof of Theorem~\ref{cstare}}

Both in this and in the following section, we will consider $D(1,a/q)$ rather than $\cc_0(a/q)$. The stated result then follows by~\eqref{sf}. Moreover, we assume that $k$ is even, as the result is trivial otherwise.

First, we observe that we can have 
\es{\label{tgt}
D_{\sin}(1,a/q)
&=\sum_{n\leq 2X}g_X( n)\frac{d(n)\sin(2\pi na/q)}{n} +O(q^{1+2\eps_1}/X),\\
}
for $\eps_1=0.1$ and where $g_X(x)$ is a bounded function supported in $[0,2X]$ and  identically $1$ for $1\leq x\leq X$. This can be seen by taking a smooth partition of unity satisfying
\est{
1=\sumprime_{M}F(x/M)\quad \forall x\in[1/2,\infty),\qquad \sumprime_{Y_1\leq M\leq Y_2}1\ll \log (2+Y_2/Y_1)\quad 1\leq Y_1\leq Y_2,\\[-0.3em]
}
where $F(x)$ is smooth, supported in $[1/2,1]$, and satisfying $F^{(j)}(x)\ll_j 1$ for all $j\geq 0$ (so that the Mellin transform $\hat F(s)$ of $F(x)$ is entire and decays rapidly on vertical strips). Then, writing $F$ in terms of its Mellin transform $\hat F(s)$, we have
\est{
\sum_{n\geq1}F\pr{\frac nM}\frac{d(n)\sin(2\pi na/q)}{n}&=\int_{(1)}\hat F(s)D(1+s,\tfrac aq)M^{s}ds=\int_{(-1- {\eps_1})}\hat F(s)D(1+s,\tfrac aq)M^{s}ds\\
&\ll q^{1+2\eps_1} M^{-1-{\eps_1}}
}
as can be seen by using~\eqref{fe} and bounding trivially. Thus,~\eqref{tgt} follows by taking $g_X(x):=\sum_{M\leq 2X}F(x/M)$.

By Euler's formula, when $X\geq q^{1+2\eps_1}$~\eqref{tgt} gives
\est{
D_{\sin}(1, a/q)^k=(2i)^{-k}\hspace{-1em}\sum_{\substack{n_1,\dots,n_k\in B^*_{2X}}} \e{(n_1+\cdots+n_k)\frac{a}{q}}\frac{\tilde d(n_1,\dots,n_k)}{n_1\cdots n_k} +O\pr{\frac{(A\log X)^{2k}q^{1+2\eps_1}}{X}},
}
where $B^*_{X}:=[-X,X]\cap\Z_{\neq0}$, 
\est{
\tilde d(n_1,\dots,n_k):=d(|n_1|)\cdots d(|n_k|)g\pr{ {|n_1|}/X}\cdots g\pr{ {|n_k|}/X}
}
and  $A$ denotes an absolute positive constant that might change from line to line. Thus, by M\"obius inversion formula and the orthogonality of additive characters we have
\est{
\frac1{\varphi(q)}\sum_{\substack{a=1,\\(a,q)=1}}^{q}D_{\sin}(1, a/q)^k&=\sum_{\ell|q}\frac{\mu(q/\ell)}{\varphi(q)}\sum_{a=1}^{\ell}D_{\sin}(1, a/\ell)^k\\[-0.8em]
&=(2i)^{-k}\sum_{\ell|q}\frac{\mu(q/\ell)\ell}{\varphi(q)}\hspace{-1.1em}\sum_{\substack{n_1,\dots,n_k\in B^*_{2X},\\n_1+\cdots+n_k\equiv 0\mod \ell}} \hspace{-1.6em}\frac{\tilde d(n_1,\dots,n_k)}{n_1\cdots n_k} +O\pr{\frac{(A\log X)^{2k}q^{1+3\eps_1}}{X}}.
}
The contribution of the terms with $n_1+\cdots+n_k\neq0$ is bounded by
\est{
\sum_{\ell|q}\frac{\ell}{\varphi(q)}\sum_{\substack{n_1,\dots,n_k\in B^*_{2X},\\0\neq n_1+\cdots+n_k\equiv 0\mod \ell}}\hspace{-1em} \frac{|\tilde d(n_1,\dots,n_k)|}{|n_1\cdots n_k|} 
&\ll_\eps \sum_{\ell|q}\frac{k\ell}{\varphi(q)}\hspace{-0.7em}\sum_{\substack{n_1,\dots,n_k\in B^*_{2X},\\ |n_1|\geq \ell/k,\\ 0\neq n_1+\cdots+n_k\equiv 0\mod q}}\hspace{-1.5em} \frac{A^kX^{\eps}d(|n_2|)\cdots d(|n_k|)  }{|n_1\cdots n_k|}\\
&\ll_\eps q^{-1+\eps}A^kX^{\eps}(\log X)^{2k},
}
since 
\est{
\sum_{\substack{\ell/k\leq n \leq 2X,\\ n\equiv c\mod \ell}}\frac1n\ll \frac{k}{\ell}+\frac{\log X}{\ell}.
}
The contribution of the diagonal term $n_1+\cdots+n_k=0$ is 
\est{
\sum_{\ell|q}\frac{\mu(q/\ell)\ell}{\varphi(q)} (2i)^{-k}\hspace{-1em}\sum_{\substack{n_1,\dots,n_k\in B^*_{2X},\\n_1+\cdots+n_k= 0}} \frac{\tilde d(n_1,\dots,n_k)}{|n_1\cdots n_k|}&=\frac{\pi^{2k}}{2^k} H_k\sum_{\ell|q}\frac{\mu(q/\ell)\ell}{\varphi(q)}+O_\eps((Ak)^{2k}q^\eps X^{-\frac12+\eps})\\[-0.5em]
&=\frac{\pi^{2k}}{2^k}H_k+O_\eps((Ak)^{2k}q^\eps X^{-\frac12+\eps}),
}
since
\est{
\sum_{\substack{ (n_1,\dots,n_k)\in (\Z_{\neq0})^k\setminus (B^*_{2X})^k ,\\n_1+\cdots+n_k= 0}} \frac{\tilde d(n_1,\dots,n_k)}{|n_1\cdots n_k|}\ll kA^kX^{-\frac12+\eps}\sum_{\substack{  n_2,\dots,n_k\in\Z_{\neq0}}} \frac{ d(|n_2|)\cdots d(|n_k|)}{|n_2\cdots n_{k}|^{1+\frac{1/2}{k-1}}}\ll (Ak)^{2k} X^{-\frac12+\eps},
}
where we used $|n_1|\geq X^\frac12 |n_2\cdots n_k|^{\frac1{2(k-1)}}$ if $ |n_2|,\dots,|n_k|,X\leq |n_1|$. Theorem~\ref{cstare} then follows upon choosing $X=q^3$.

\section{Proof of Proposition~\ref{mp} and Theorem~\ref{cbg}}
The following lemmas give Proposition~\ref{mp} in the cases of $x\in\Q$ and $x\notin\Q$ respectively.
\begin{lemma}\label{caf}
Let $(a,q)=1$, $q>0$ and let $v_0,\dots,v_r$ be the partial denominators of the continued fraction expansion $a/q=\pa{a_0;a_1,\dots,a_r}$. 
Then
\est{
D_{\sin}(1,a/q)=-\frac{\pi^2}2\sum_{\ell=1}^{r}\frac{(-1)^\ell}{v_\ell}\pr{\frac1{\pi v_{\ell}}+\psi\pr{\frac{v_{\ell-1}}{v_\ell}}}.
}
\end{lemma}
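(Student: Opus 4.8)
The plan is to reduce everything to the reciprocity formula~\eqref{rf}, running the continued-fraction (Euclidean) algorithm backward through the denominators $v_\ell$. First I would use~\eqref{sf} with $a$ replaced by $\overline a$ to get $D_{\sin}(1,a/q)=\tfrac{\pi^2}{2q}\cc_0(\overline a/q)$, reducing the problem to evaluating $\cc_0(\overline a/q)$. Recalling the convergent recurrence $v_\ell=a_\ell v_{\ell-1}+v_{\ell-2}$ (with $v_{-1}=0$, $v_0=1$, $v_r=q$) and the determinant identity $u_\ell v_{\ell-1}-u_{\ell-1}v_\ell=(-1)^{\ell+1}$, the case $\ell=r$ gives $a\,v_{r-1}\equiv(-1)^{r+1}\pmod q$, whence $\overline a\equiv(-1)^{r+1}v_{r-1}\pmod q$. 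Since $\cc_0$ depends only on its argument modulo $1$ and is odd, this yields $\cc_0(\overline a/q)=(-1)^{r+1}\cc_0(v_{r-1}/v_r)$.

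Next I would introduce $c_\ell:=\cc_0(v_{\ell-1}/v_\ell)$ for $0\le\ell\le r$ (so $c_0=0$ and $c_r=\cc_0(v_{r-1}/v_r)$ is the quantity above) and derive a first-order recurrence. Applying~\eqref{rf} to $\cc_0(v_{\ell-1}/v_\ell)$ and using $v_\ell\equiv v_{\ell-2}\pmod{v_{\ell-1}}$ with the periodicity of $\cc_0$ to write $\cc_0(v_\ell/v_{\ell-1})=\cc_0(v_{\ell-2}/v_{\ell-1})=c_{\ell-1}$, I obtain
\[
c_\ell=-\frac{v_\ell}{v_{\ell-1}}\,c_{\ell-1}+\frac1{\pi v_\ell}+\psi\!\left(\frac{v_{\ell-1}}{v_\ell}\right)
\]
for $1\le\ell\le r$; the case $\ell=1$ is the base step, where $\cc_0$ at an integer argument vanishes, consistently with $c_0=0$.

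Finally I would solve the recurrence with the integrating factor $(-1)^\ell v_\ell$: setting $c_\ell=(-1)^\ell v_\ell w_\ell$ turns it into the telescoping identity $w_\ell-w_{\ell-1}=\frac{(-1)^\ell}{v_\ell}\big(\tfrac1{\pi v_\ell}+\psi(v_{\ell-1}/v_\ell)\big)$ with $w_0=0$, so that $c_r=(-1)^r q\sum_{\ell=1}^r\frac{(-1)^\ell}{v_\ell}\big(\tfrac1{\pi v_\ell}+\psi(v_{\ell-1}/v_\ell)\big)$. Substituting into $\cc_0(\overline a/q)=(-1)^{r+1}c_r$ and then into $D_{\sin}(1,a/q)=\tfrac{\pi^2}{2q}\cc_0(\overline a/q)$, the sign factors $(-1)^{r+1}(-1)^r$ collapse to $-1$ and the $q$ cancels, giving the claimed formula. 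I expect the only genuine difficulty to be the consistent bookkeeping of the two sources of signs — the parity $(-1)^{r+1}$ from the inversion and the $(-1)^\ell$ from the integrating factor — together with verifying the boundary reductions ($c_0=0$ and $\cc_0$ at integers vanishing); all the analytic content sits in~\eqref{rf}, so no estimates are required.
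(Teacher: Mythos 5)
Your proposal is correct and follows essentially the same route as the paper: reduce via~\eqref{sf} to $\cc_0(\overline a/q)$, use the determinant identity to get $\overline a\equiv(-1)^{r+1}v_{r-1}\pmod q$, and iterate the reciprocity formula~\eqref{rf} down the chain of convergent denominators with base case $\cc_0$ vanishing at integers. The only difference is presentational — the paper phrases the iteration as a descending Euclid algorithm on $b/q=(-1)^{r+1}\overline a/q$ and substitutes repeatedly, whereas you package the same computation as a first-order recurrence for $c_\ell=\cc_0(v_{\ell-1}/v_\ell)$ solved by the integrating factor $(-1)^\ell v_\ell$ — and your sign bookkeeping checks out.
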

\begin{proof}
Write $\frac bq:=(-1)^{r+1}\frac{\overline a}{q}$. Then, one has the continued fraction expansion $b/q=\pa{0;b_1,\dots,a_r}= \pa{0;a_r,\dots,a_1}$. Moreover, the Euclid algorithm for $b/q$ gives
\est{%\label{aaa}
&y_1=q,\qquad y_2=b,\\
&y_{n-1}=b_{n-1}y_{n}+y_{n+1},\qquad n=1,\dots r+1,\\
}
with  $y_{r+1-\ell}=v_{\ell}$, where $v_\ell$ is the $\ell$-th partial quotient of $a/q$ (as usual we put $v_{-1}:=0$). Thus, applying repeatedly the reciprocity formula~\eqref{rf} and using that $c_0(1)=0$, we obtain
\est{
\frac1qc_0\pr{b/q}
&=-\sum_{m=1}^{r}\frac{(-1)^m}{\pi y_{m}^2}-\sum_{m=1}^{r}\frac{(-1)^m}{y_m}\psi\pr{\frac{y_{m+1}}{y_m}}=\sum_{\ell=1}^{r}\frac{(-1)^{\ell+r}}{v_\ell}\pr{\frac1{\pi v_\ell}+\psi\pr{\frac{v_{\ell-1}}{v_\ell}}}
}
and the Lemma follows by~\eqref{sf} since $D(s,x)=-D(s,-x)$.
\end{proof}

\begin{lemma}\label{fflrm}
Let $x\in\R\setminus\Q$ and assume $x$ has continued fraction expansion $x=\pa{a_0;a_1,a_2,\dots}$ with partial quotients $v_0,v_1,v_2,\dots$ Then
\es{\label{if}
D_{\sin}(1,a/q)=-\frac{\pi^2}2\sum_{\ell=1}^{\infty}\frac{(-1)^\ell}{v_\ell}\pr{\frac1{\pi v_{\ell}}+\psi\pr{\frac{v_{\ell-1}}{v_\ell}}},
}
whenever $D_{\sin}(1,a/q)$ is defined. Moreover, writing 
\est{
D_{X}(1,x):=\sum_{n\leq X}\frac{d(n)\sin(2\pi nx)}{n}, \qquad  S(x):=\sum_{n=1}^\infty \frac{\log v_{n+1}}{v_n},
}
we have $D_{X}(1,x)\ll S(x)$ and $D(1,x)\ll S(x)$, uniformly in $x\in[0,1]\setminus\Q$, $X\geq2$.
\end{lemma}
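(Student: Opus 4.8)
The plan is to reduce the irrational case to the rational case of Lemma~\ref{caf} by approximating $x$ with its convergents and passing to the limit via a uniform truncation bound. Write $u_r/v_r=\pa{a_0;a_1,\dots,a_r}$ for the $r$-th convergent of $x$; then $(u_r,v_r)=1$ and the partial denominators of $u_r/v_r$ are exactly $v_0,\dots,v_r$, so Lemma~\ref{caf} applied to $a/q=u_r/v_r$ shows that the $r$-th partial sum of the series in~\eqref{if} equals $D_{\sin}(1,u_r/v_r)$. Hence~\eqref{if} is equivalent to the convergence $D_{\sin}(1,u_r/v_r)\to D_{\sin}(1,x)$ as $r\to\infty$ whenever the limit is defined, the series on the right of~\eqref{if} then converging automatically.

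For the full-sum bound $D(1,x)\ll S(x)$ I would estimate the expression from Lemma~\ref{caf} term by term. From~\eqref{aps} one gets the uniform bound $|\psi(y)|\ll(1+|\log y|)/y$ for $y\in(0,1]$; taking $y=v_{\ell-1}/v_\ell$ bounds the $\ell$-th summand by $\ll v_\ell^{-2}+v_{\ell-1}^{-1}(1+\log(v_\ell/v_{\ell-1}))$. Since the $v_\ell$ grow at least geometrically, $\sum_\ell v_\ell^{-2}=O(1)$, while $\sum_\ell v_{\ell-1}^{-1}\log(v_\ell/v_{\ell-1})\ll S(x)$ after reindexing. Applied to the convergents this gives $D_{\sin}(1,u_r/v_r)\ll S(u_r/v_r)\le S(x)$ uniformly in $r$, whence $D(1,x)\ll S(x)$ once the convergence above is known.

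The substantive point is the truncated bound $D_X(1,x)\ll S(x)$, which also supplies that convergence. I would fix $r$ by $v_r\le X<v_{r+1}$ and split the error in two. First, replacing $x$ by $u_r/v_r$ inside $D_X$ costs $\ll X\log X\,|x-u_r/v_r|\ll X\log X/(v_rv_{r+1})\ll(\log v_{r+1})/v_r\le S(x)$, using $\sum_{n\le X}d(n)\ll X\log X$, the convergent inequality $|x-u_r/v_r|<1/(v_rv_{r+1})$, and $X<v_{r+1}$. Second, and this is the crux, I must bound the tail $\sum_{n>X}\frac{d(n)\sin(2\pi n u_r/v_r)}{n}$ with $q:=v_r\le X$ by $\ll(\log v_{r+1})/v_r$, so that together with $D(1,u_r/v_r)\ll S(x)$ it controls $D_X(1,u_r/v_r)$, hence $D_X(1,x)$.

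The tail estimate is the main obstacle. The crude smooth cutoff of~\eqref{tgt}, with error $\ll q^{1+2\eps_1}/X$, is useless here since $X$ may be as small as $v_r=q$; one must use the arithmetic of $a/q$. I would obtain the bound from the functional equation~\eqref{fe} by Voronoi summation: dualizing $\sum_{n>X}d(n)e(nu_r/v_r)/n$ produces a sum over $\overline{u_r}/v_r$ weighted by Bessel transforms of the cutoff, effectively of length $\ll v_r^2/X\le v_r$, which yields the saving once $X\ge v_r$. Alternatively one can bypass the tail through the identity $\pi\sum_{n\le X}\frac{\frac12-\{nx\}}{n}=D_X(1,x)+\sum_{m>X}\frac{d_{\le X}(m)\sin 2\pi mx}{m}$, reducing matters to the classical continued-fraction estimates $\sum_{n\le X}\frac{\frac12-\{nx\}}{n}\ll S(x)$ and $\frac1X\sum_{d\le X}\frac1{\|dx\|}\ll S(x)$. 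Either route controls the tail by a single term of $S(x)$; feeding this back gives $D_X(1,x)\ll S(x)$, and letting $X\to\infty$ through $v_r\le X<v_{r+1}$ simultaneously proves $D(1,x)\ll S(x)$ and the formula~\eqref{if}.
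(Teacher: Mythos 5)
Your first two paragraphs coincide with the paper's opening moves: the reduction to convergents via Lemma~\ref{caf}, the termwise bound of the series through~\eqref{aps}, and the replacement of $x$ by $u_r/v_r$ at cost $\ll X\log X\,|x-u_r/v_r|$ are all there. The genuine gap is exactly where you locate the crux, and it is fatal as stated: the tail bound $\sum_{n>X}d(n)\sin(2\pi n u_r/v_r)/n\ll(\log v_{r+1})/v_r$ for $X\geq v_r$ is neither proved nor plausibly provable at that strength. After applying the functional equation~\eqref{fe}/Voronoi, the dual sum has length $\asymp v_r^2/X\leq v_r$ with divisor coefficients twisted by $e(m\overline{u_r}/v_r)$; bounding it trivially gives only $O(\log v_r)$, and even full square-root cancellation --- which is not automatic from the dual length being $\leq v_r$, and would itself require Weil-type input --- yields only $v_r^{-1/2+\eps}$. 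When $a_{r+1}$ is bounded, so that $v_{r+1}\asymp v_r$, your target is $\asymp(\log v_r)/v_r$, far below either bound; heuristically the single block $v_r<n\leq 2v_r$ already has size about $v_r^{-1/2}\log v_r$. Worse, the dual fraction is $\overline{u_r}/v_r\equiv\pm v_{r-1}/v_r\pmod 1$, a rational of exactly the same nature with reversed partial quotients, so ``the saving once $X\geq v_r$'' begs the question. Your fallback route has a concretely false ingredient: $\frac1X\sum_{d\leq X}\|dx\|^{-1}\ll S(x)$ fails, since the single term $d=v_r$ contributes $\approx v_{r+1}/X\approx a_{r+1}$ when $X\asymp v_r$, while $S(x)$ contains only $(\log v_{r+1})/v_r$ and its kin; and your other ``classical'' estimate $\sum_{n\leq X}(\tfrac12-\{nx\})/n\ll S(x)$ is not a freebie but essentially the deep estimates of de la Bret\`eche--Tenenbaum~\cite{dBT}.

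The paper evades the problem you could not solve by never demanding a sharp tail at the bare threshold $X\asymp v_r$. It splits at $\xi_R=v_R(\log v_R)^B$, with $R$ essentially maximal such that $\xi_R\leq X$; the middle range $\xi_R\leq n\leq X$ is estimated for the irrational $x$ itself, by partial summation from (11.1) and (11.4) of~\cite{dBT}, giving $O((\log v_{R+1})/v_R+1/\log v_R)$, and only for $n\leq\xi_R$ is $x$ replaced by $u_R/v_R$. The completion of the rational sum is then needed only beyond $\xi_R$, where the $(\log v_R)^B$ margin over $v_R$ makes the crude Mellin/functional-equation argument --- precisely the one you correctly dismissed at $X\asymp v_r$ --- succeed with error $O(1/\log v_R)$. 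Note also that the paper tolerates errors $O(1/\log v_R)$, which tend to $0$ as $X\to\infty$ and suffice both for~\eqref{if} and, combined with the termwise bound via~\eqref{aps}, for the bounds by $S(x)$; your insistence on $\ll(\log v_{r+1})/v_r$ for every piece is simultaneously stronger than needed and out of reach. To repair your argument you would have to import the de la Bret\`eche--Tenenbaum machinery for the middle range anyway, at which point it becomes the paper's proof.
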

\begin{proof}
For a large positive constant $B\geq5$, let $\xi_{r}=v_{r}(\log v_{r})^B$ and let $R$ be the minimum integer such that $\xi_{R}\leq X$. We can split $D_X(1,x)$ into
\es{\label{spt}
D_{X}(1,x)=\sum_{n\leq  \xi_R}d(n)\frac{\sin (2\pi nx)}{n}+\sum_{\xi_{R}\leq n\leq  X}d(n)\frac{\sin (2\pi nx)}{n}.
}
The second addend can be treated using the work of de la Bret\`eche and Tenenbaum~\cite{dBT}. Indeed, by partial summation, if $B$ is sufficiently large we have 
\est{
\sum_{\xi_{R}\leq n\leq  X}d(n)\frac{\sin (2\pi nx)}{n}&=\sum_{\xi_{R}\leq n\leq  X}d(n)\frac{\sin (2\pi nx)}{X}+\int_{\xi_{R}}^X\sum_{\xi_{R}\leq n\leq  t}d(n)\frac{\sin (2\pi nx)}{t^2}\, dt\\
&=O\pr{\frac{\log (v_{R+1})}{v_{R}}+\frac1{\log (v_{R})}}
}
by~(11.1) and~(11.4) of~\cite{dBT}. For the first addend of the right hand side of~\eqref{spt}, we first observe that 
\est{
\sum_{n\leq \xi_R}\frac{d(n)\sin(2\pi nx)}{n}=\sum_{n\leq \xi_R}\frac{d(n)\sin(2\pi n\frac{u_R}{v_R})}{n}+O\pr{\frac{(\log v_R)^{1+B}}{v_{R+1}}}
}
since $|x-u_R/v_R|\leq (v_Rv_{R+1})^{-1}$. Moreover, we observe that  by Mellin's formula we have
\est{
\sum_{n\leq \xi_R}d(n)\frac{\sin(2\pi n\frac{u_R}{v_R})}{n}&=D_{\sin}(1,{u_R}/{v_R})+\frac1{2\pi i}\int_{\substack{C}}D_{\sin}(1+s,{u_R}/{v_R})\xi_R^s\frac{ds}{s}+O((\log \xi_{R})^2/T)\\
&=D_{\sin}(1,u_R/v_R)+O(v_RT^{\frac12}(\log v_R)^2/\xi_R+(\log v_R)^2/T)\\
&=D_{\sin}(1,u_R/v_R)+O((\log v_R)^{-1}),
}
where $C$ denotes the line from $s=(-1-\frac1{\log x})-iT$ to $s=(-1-\frac1{\log x})+iT$ with $T=(\log v_{R})^4$ and where to bound the integral we used the functional equation~\eqref{fe} and a trivial bound.

Finally, by Lemma~\ref{caf} we have
\est{
D_{\sin}(1,u_R/v_R)=-\frac{\pi^2}2\sum_{\ell=1}^{R}\frac{(-1)^\ell}{v_\ell}\pr{\frac1{\pi v_{\ell}}+\psi\pr{\frac{v_{\ell-1}}{v_\ell}}}
}
and thus
\est{
D_{X}(1,x)=-\frac{\pi^2}2\sum_{\ell=1}^{R}\frac{(-1)^\ell}{v_\ell}\pr{\frac1{\pi v_{\ell}}+\psi\pr{\frac{v_{\ell-1}}{v_\ell}}}+O\pr{\frac{\log (v_{R+1})}{v_{R}}+\frac1{\log (v_{R})}}.
}
As $X\rightarrow\infty$, we have $v_{R}\rightarrow\infty$ and, by Theorem 4.4 of~\cite{dBT}, $\log(v_{R+1})/v_R\rightarrow 0$ if (and only if) the series defining $S(1,x)$ converges and thus we obtain~\eqref{if}. The second assertion of the Lemma then follows by~\eqref{aps}.
\end{proof}

We need two results from Khinchin's book on continued fractions~\cite{Khi}.
\begin{lemma}\label{fk}
For all $x\in\R\setminus\Q$ and all $n\geq1$ we have $v_n\geq 2^{\frac{n-3}2}$.
\end{lemma}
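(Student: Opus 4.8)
The plan is to run a short induction on the standard recurrence for the continued fraction denominators. Recall that the partial denominators of $x=\pa{a_0;a_1,a_2,\dots}$ satisfy $v_{-1}=0$, $v_0=1$, and $v_n=a_nv_{n-1}+v_{n-2}$ for $n\geq1$, where each partial quotient obeys $a_n\geq1$ for all $n\geq1$ precisely because $x$ is irrational (so the expansion is infinite and no $a_n$ vanishes). Note that the $v_n$ depend only on $a_1,a_2,\dots$, so $a_0$ plays no role.

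First I would record the elementary monotonicity $v_{n-1}\leq v_n$ for every $n\geq1$, which is immediate from $v_n-v_{n-1}=(a_n-1)v_{n-1}+v_{n-2}\geq0$. Substituting this back into the recurrence yields the doubling estimate $v_n\geq v_{n-1}+v_{n-2}\geq 2v_{n-2}$, valid for every $n\geq2$. This is the only structural input needed.

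With the doubling inequality in hand, the bound follows by an induction that advances in steps of two, so I need two base cases of opposite parity: $v_1\geq1\geq 2^{-1}=2^{(1-3)/2}$, and $v_2=a_2a_1+1\geq2\geq 2^{-1/2}=2^{(2-3)/2}$. For the inductive step, fix $n\geq3$ and assume the claim at index $n-2\geq1$; then the doubling inequality gives $v_n\geq 2v_{n-2}\geq 2\cdot 2^{(n-5)/2}=2^{(n-3)/2}$, closing the induction and covering all $n\geq1$.

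There is no genuine obstacle here, as this is a standard fact from the metric theory of continued fractions (Khinchin); the only care required is in bookkeeping the parity and the small-index base cases so that the exponent lands exactly on the stated value $2^{(n-3)/2}$. I would also remark that the bound is far from tight: the extremal case $x=\pa{0;1,1,1,\dots}$ produces the Fibonacci denominators, which grow like $\phi^n$ with $\phi=(1+\sqrt5)/2>\sqrt2$, so the base $\sqrt2$ could in principle be improved to $\phi$, but the weaker clean bound above is all that is used in the sequel.
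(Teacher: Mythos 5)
Your proof is correct: the monotonicity $v_{n-1}\leq v_n$, the doubling inequality $v_n\geq 2v_{n-2}$ for $n\geq2$, the two base cases of opposite parity, and the step-two induction all check out with no gaps. The paper itself gives no argument but simply cites Theorem~12 of Khinchin's book, whose proof is exactly this doubling induction (Khinchin in fact gets the slightly sharper $v_n\geq 2^{(n-1)/2}$ for $n\geq2$), so you have in effect written out the standard argument behind the citation.
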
 
\begin{proof}
This is Theorem~12 of~\cite{Khi}.
\end{proof}
The following lemma is a minor refinement of Theorem~31 of~\cite{Khi}.

\begin{lemma}\label{afra}
Let $K\geq 1$. Then for all $\eps>0$, there exists $B_\eps>0$ such that
\est{
E(K):=\tn{meas}\pr{\pg{x\in[0,1]\mid v_r(x)\geq Ke^{B_\eps r}\ \exists r\geq1}}\ll_\eps K^{-1+\eps}.
}
\end{lemma}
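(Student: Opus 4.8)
The plan is to prove the bound by a union bound over $r$, combined with a fractional-moment (Markov) estimate for the individual partial denominators $v_r$. First I would write
\[
E(K)\leq \sum_{r\geq 1}\tn{meas}\pr{\pg{x\in[0,1]\mid v_r(x)\geq Ke^{B_\eps r}}},
\]
so that it suffices to bound each term and then sum a geometric series in $r$. For a fixed $r$ and a parameter $0<\theta<1$ to be chosen, Markov's inequality gives $\tn{meas}(v_r\geq M)\leq M^{-\theta}\int_0^1 v_r(x)^\theta\,dx$, which reduces the problem to bounding the fractional moment $\int_0^1 v_r(x)^\theta\,dx$. The restriction $\theta<1$ is what will make this moment finite.

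To estimate the moment I would decompose $[0,1]$ into the cylinder sets indexed by the first $r$ partial quotients $(a_1,\dots,a_r)$. On the cylinder associated with $(a_1,\dots,a_r)$ the denominator $v_r$ is constant and the length of the cylinder is $\tfrac1{v_r(v_r+v_{r-1})}\leq v_r^{-2}$, so that
\[
\int_0^1 v_r(x)^\theta\,dx\leq \sum_{a_1,\dots,a_r\geq1} v_r^{\theta-2}.
\]
Using the recursion $v_r=a_rv_{r-1}+v_{r-2}\geq a_rv_{r-1}$ together with $\theta-2<0$, summing over $a_r$ first gives $\sum_{a_r\geq1}v_r^{\theta-2}\leq \zeta(2-\theta)\,v_{r-1}^{\theta-2}$, where the series converges precisely because $2-\theta>1$. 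Iterating this inequality $r$ times (the base case being $v_0=1$) yields $\int_0^1 v_r(x)^\theta\,dx\leq \zeta(2-\theta)^r$.

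Combining the two steps gives $\tn{meas}(v_r\geq Ke^{B_\eps r})\leq K^{-\theta}\bigl(\zeta(2-\theta)\,e^{-\theta B_\eps}\bigr)^r$. Taking $\theta=1-\eps$ and then choosing $B_\eps$ large enough (depending on $\eps$) that $\zeta(1+\eps)\,e^{-(1-\eps)B_\eps}\leq\tfrac12$ turns the sum over $r$ into a geometric series of ratio $\leq\tfrac12$; summing it yields $E(K)\ll_\eps K^{-(1-\eps)}$, which is the claim. Note that the classical lower bound of Lemma~\ref{fk} is not needed here, and the argument is self-contained.

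The hard part — really the only point that needs care — is the tension between wanting $\theta$ close to $1$, in order to obtain the near-optimal exponent $K^{-1+\eps}$, and the fact that $\zeta(2-\theta)\to\infty$ as $\theta\to1^-$, which makes the moment bound degenerate. This blow-up is harmless because the divergent constant enters only to the $r$-th power and is absorbed by taking $B_\eps=B_\eps(\eps)$ large; this is exactly the freedom the statement grants, and it is what upgrades the qualitative growth statement (Theorem~31 of~\cite{Khi}) to the quantitative tail bound $\ll_\eps K^{-1+\eps}$.
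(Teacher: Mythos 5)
Your proof is correct, and it takes a genuinely different route from the paper. The paper follows Khinchin's proof of his Theorem~31: it quotes the explicit estimate $\tn{meas}\pr{E_n(K)}\ll \frac{2^n}{Ke^{Bn}}\sum_{\ell=0}^{n-1}\frac{(\log(Ke^{Bn}))^\ell}{\ell!}$ for $E_n(K)=\pg{x\in[0,1]\mid v_n(x)\geq Ke^{Bn}}$, then splits into the cases $K\leq e^{Bn}$ and $K>e^{Bn}$, using Stirling's formula to control the truncated exponential series, and sums over $n$ to get $E(K)\ll_\eps K^{-1+e^{-B/2}}$. You instead run a Chernoff-type argument: Markov's inequality with a fractional moment $\int_0^1 v_r(x)^\theta\,dx$, $0<\theta<1$, which you bound by $\zeta(2-\theta)^r$ via the exact cylinder decomposition (length $\frac{1}{v_r(v_r+v_{r-1})}\leq v_r^{-2}$) and the recursion $v_r\geq a_rv_{r-1}$, iterated down to $v_0=1$; all steps check out, including the telescoping of the sum over $a_r,a_{r-1},\dots,a_1$ and the final choice $\theta=1-\eps$, $\zeta(1+\eps)e^{-(1-\eps)B_\eps}\leq\frac12$. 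What your approach buys: it is self-contained (no appeal to an intermediate equation from Khinchin's book), it avoids the two-case analysis and Stirling entirely, and it makes transparent where the exponent loss comes from --- the blow-up of $\zeta(2-\theta)$ as $\theta\to1^-$ is absorbed into $B_\eps$, playing exactly the role of the paper's $e^{-B/2}$ in the exponent $K^{-1+e^{-B/2}}$. What the paper's route buys is mainly economy relative to a standard reference: it is a one-line modification of an argument already in~\cite{Khi} (replacing $g=Ke^n$ in the displayed equation there), whereas yours rebuilds the metric estimate from scratch. Both arguments yield the same conclusion at the same strength, and your observation that Lemma~\ref{fk} is not needed here is consistent with the paper, which uses that lemma only in the subsequent corollary.
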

\begin{proof}
Proceeding as in the proof of Theorem~ 31 of~\cite{Khi}, we see that for all $n,B\geq1$  we have 
\est{
\tn{meas}\pr{E_n(K)}\ll \frac{2^n}{Ke^{Bn}}\sum_{\ell=0}^{n-1}\frac{(\log( Ke^{Bn}))^\ell}{\ell!},
}
where $E_n(K):=\pr{\pg{x\in[0,1]\mid v_n(x)\geq Ke^{Bn}}}$
(this is the first equation on page 68 of~\cite{Khi}, with $g=Ke^{n}$). Now, if $K\leq e^{Bn}$ and $B$ is large enough, then 
\est{
\tn{meas}\pr{E_n(K)}&\ll \frac{2^n}{Ke^{Bn}}\sum_{\ell=0}^{n-1}\frac{(\log( Ke^{Bn}))^\ell}{\ell!}\ll \frac{2^n}{Ke^{Bn}}\sum_{\ell=0}^{n-1}\frac{(2Bn)^\ell}{\ell!}\ll\frac{n}{K}\frac{(4ne^{-B})^n}{n!}\ll \frac{e^{-Bn/2}}{K},
}
where we used $C^\ell /\ell!\ll C^n /n!$, valid for $0\leq \ell\leq n\leq C$, and Stirling's formula. 
In the same way, if $K>e^{Bn}$ and $B$ is large enough, then
\est{
\tn{meas}\pr{E_n(K)}&\ll \frac{n(4\log K)^n}{Kn!e^{Bn}}\ll \frac{(e^{-B/2}\log K)^n}{Kn!}.
}
Thus, we have
\est{
E(K)\leq\sum_{n=1}^\infty\tn{meas}\pr{E_n(K)}\ll_\eps K^{-1+e^{-B/2}}
}
and the Lemma follows.
\end{proof}
\begin{corol}%\label{cavd}
For $K\geq 1$, we have
\as{\label{mwq}
\tn{meas}\pr{\pg{x\in[0,1]\mid |S(x)|> K}}= O( e^{-\delta K})
}
for some $\delta>0$. 
\end{corol}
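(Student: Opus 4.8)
The plan is to combine the two Khinchin-type lemmas already established. Since each partial denominator satisfies $v_n\geq 1$, every term of the series defining $S(x)$ is non-negative, so $|S(x)|=S(x)$ and it suffices to bound from above the measure of $\pg{x\in[0,1]\mid S(x)> K}$.

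First I would fix $\eps=\tfrac12$ in Lemma~\ref{afra} and set $B:=B_{1/2}$, obtaining an exceptional set $E(K')$ with $\tn{meas}(E(K'))\ll (K')^{-1/2}$ outside of which $v_r(x)< K'e^{Br}$ for every $r\geq1$. For such $x$ I would bound each summand of $S(x)$ using $\log v_{r+1}\leq \log K'+B(r+1)$ together with the exponential lower bound $v_r\geq 2^{(r-3)/2}$ from Lemma~\ref{fk}:
\est{
\frac{\log v_{r+1}}{v_r}\leq \frac{\log K'+B(r+1)}{2^{(r-3)/2}}.
}
Since the denominator $2^{(r-3)/2}$ grows geometrically while the numerator is linear in $r$, summing over $r\geq1$ gives a convergent series, and hence $S(x)\leq C(\log K'+1)$ on the complement of $E(K')$, for some constant $C=C(B)$.

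Next, given $K$, I would choose $K'=e^{K/(2C)}$ (legitimate once $K$ is large enough that $K'\geq1$), so that $C(\log K'+1)=\tfrac{K}2+C\leq K$ as soon as $K\geq 2C$. With this choice the inclusion $\pg{x\in[0,1]\mid S(x)> K}\subseteq E(K')$ holds, and therefore by Lemma~\ref{afra}
\est{
\tn{meas}\pr{\pg{x\in[0,1]\mid S(x)> K}}\leq \tn{meas}(E(K'))\ll (K')^{-1/2}=e^{-K/(4C)},
}
which is the claimed bound with $\delta=1/(4C)$; the range $K<2C$ is absorbed into the implied constant since the measure is always at most $1$. Note that this inclusion is valid even at points where $S(x)=+\infty$, so the (measure-zero) divergence set causes no difficulty.

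The only genuine subtlety is the order of quantifiers: the constant $B_\eps$ furnished by Lemma~\ref{afra}, and hence the constant $C$ governing the convergent series, must be fixed \emph{before} $K'$ is selected. Fixing $\eps=\tfrac12$ once and for all resolves this cleanly, and everything else reduces to the elementary estimate for the geometrically convergent series above.
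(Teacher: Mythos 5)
Your proposal is correct and follows essentially the same route as the paper: both combine Lemma~\ref{fk} (the geometric lower bound $v_n\geq 2^{(n-3)/2}$) with Lemma~\ref{afra} applied at a threshold exponential in $K$, deducing $S(x)\ll 1+\log K'$ off the exceptional set and concluding by the measure bound for $E(K')$. The only difference is bookkeeping: the paper simply takes $K'=e^{K}$ and absorbs the constants implicitly, whereas you fix $\eps=\tfrac12$ and calibrate $K'=e^{K/(2C)}$ explicitly, which is a harmless (indeed slightly more careful) variant of the same argument.
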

\begin{proof}
By Lemma~\ref{fk} and~\ref{afra}, if $x\in[0,1]\setminus E(e^K)$ we have
\est{
S(x)\ll\sum_{n=1}^\infty \frac{B_\eps n+ K}{2^{n/2}}\ll_\eps  1+ K
}
and~\eqref{mwq} follows.
\end{proof}

We can now prove Theorem~\ref{cbg} and Corollary~\ref{mcrm}. 
\begin{proof}[Proof of Theorem~\ref{cbg} and Corollary~\ref{mcrm}]
Expressing the linear constraint in the definition of $H_k$ as an integral, we see that
\est{
H_k&=(i\pi^{2})^{-k}\lim_{X\rightarrow \infty}\int_{0}^1\sum_{\substack{-X\leq n_1,\dots,n_k\leq X,\\ n_1\cdots n_k\neq0}} \frac{\e{(n_1+\cdots n_k)x}d(|n_1|)\cdots d(|n_k|)}{n_1\cdots n_k}\,dx\\
&=\lim_{X\rightarrow \infty}\frac{2^k}{\pi^{2k}}\int_{0}^1D_X(1,x)^k\,dx=\frac{2^k}{\pi^{2k}}\int_{0}^1D(1,x)^k\,dx,
}
where the exchange of order of summation and integration is justified by the dominated convergence theorem, since $D_X(1,x)\ll S(x)$ by Lemma~\ref{fflrm} and, by~\eqref{mwq},
\est{
\int_{0}^1S(x)^kdx\leq \sum_{L=1}^{\infty}{\int_0^1} \chi_L(x)L^kdx\ll \sum_{L=1}^{\infty}L^ke^{-(L-1)\delta}\ll A^kk!,\
}
for some $\delta,A>0$ and where $\chi_L$ is the characteristic function of the set $\{x\mid L-1\leq S(x)\leq L\}$.

Since we also have $D(1,x)\ll S(x)$, the above computation also proves Theorem~\ref{cbg}. Corollary~\ref{mcrm} then follows since $\sum_{k=1}H_kt^k/k!$ has a positive radius of convergence.
\end{proof}

Finally, we prove Corollary~\ref{finc}.  
\begin{proof}[Proof of Corollary~\ref{finc}]
By Lemma~\ref{afra}, we can find some absolute constants $K,B$ such that the set
\est{
S(x_1,x_2,\kappa):=\{x=\pa{0;1,\dots,1,x_1,x_2,a_{\kappa+3},a_{\kappa+4},\dots}\mid 1\leq a_{\ell}\leq Ke^{B\ell},\forall  \ell\geq \kappa+3\}
}
has positive measure for any $x_1,x_2,\kappa \in\Z_{>0}$. Thus, to prove the corollary it is enough to show that for any $z\in\R$, $\eps>0$ there exist integers
 $x_1,x_2,\kappa\geq1$ such that $z< D(1,x)\leq z+\eps$ for all $x\in S(x_1,x_2,\kappa)$.

Now, if $x\in S$ then by~\eqref{csa} we have
$D_{\sin}(s,x)=\mathcal A+\mathcal B+\mathcal C,
$
where
\est{
\mathcal A&=-\frac{\pi^2}2\sum_{\ell=1}^{\kappa}\frac{(-1)^\ell}{v_\ell}\pr{\frac1{\pi v_{\ell}}+\psi\pr{\frac{v_{\ell-1}}{v_\ell}}},\\
\mathcal B&=-\frac{\pi^2}2\sum_{\ell=\kappa+1}^{\kappa+2}\frac{(-1)^\ell}{v_\ell}\pr{\frac1{\pi v_{\ell}}+\psi\pr{\frac{v_{\ell-1}}{v_\ell}}},\\
}
and, by~\eqref{aps} and Lemma~\ref{fk},
\est{
\mathcal C&\ll\sum_{\ell=\kappa+3}^{\infty}\frac{\log v_{\ell}}{v_{\ell-1}}\ll \sum_{\ell=\kappa+3}^{\infty}\frac{\log K+B\ell}{2^\ell}\leq \eps/10,
}
provided that $\kappa=\kappa_\eps$ is large enough. Now, from the relation $v_n=a_nv_{n-1}+v_{n-2}$, we see that 
$v_{\kappa+1}=x_{1}v_{\kappa}+v_{\kappa-1}$ and $v_{\kappa+2}= x_{2}(x_{1}v_{\kappa}+v_{\kappa-1})+v_{\kappa}$. Thus, by~\eqref{aps} we have 
\est{
\mathcal B&=\alpha_\kappa\log (x_1)-\beta_\kappa\frac{\log (x_2)}{x_1+\gamma_\kappa}+o(1)\\
&=\alpha_\kappa\log (x_1)-\beta_\kappa\frac{\log (x_2)}{x_1}+o(1)+O\pr{\frac{\log (x_{2})}{x_1^2}},
}
as $x_1,x_2\rightarrow\infty$ (and $\kappa$ fixed), for some $\alpha_\kappa,\beta_\kappa,\gamma_\kappa\neq0$. Now, if we pick
\est{
x_2:=[\exp\pr{\beta_\kappa^{-1}x_1(\alpha_\kappa \log (x_1)+\mathcal A-z-\eps/2)}],
}
then $\mathcal B=z-\mathcal A+\eps/2+o(1)$. Thus, if $x_1$ is large enough we have 
\est{
z< D(1,x)\leq z+\eps
}
and the corollary follows.
\end{proof}
\begin{remark}
We remark that a modification of this proof in the spirit of~\cite{Hic} would have given Corollary~\ref{den} directly.
\end{remark}

\appendix


\begin{thebibliography}{0}

\bibitem[Bag]{Bag}
Bagchi, Bhaskar. {\it On Nyman, Beurling and Baez-Duarte's Hilbert space reformulation of the Riemann hypothesis.} Proc. Indian Acad. Sci. Math. 116 (2006), no. 2, 137-146; arxiv math.NT/0607733.

\bibitem[BC]{BC}
Bettin S.; Conrey, J.B. {\it A reciprocity formula for a cotangent sum.} Int. Math. Res. Not. IMRN, 2013, no. 4.

\bibitem[dBT]{dBT}
de la Bret\`eche, R.; Tenenbaum, G. {\it S\'eries trigonom\`etriques \`a coefficients arithm\'etiques.} J. Anal. Math. 92 (2004), 1--79.

\bibitem[DFI]{DFI}
Duke, W.; Friedlander, J.B.; Iwaniec, H. {\it Bilinear forms with Kloosterman fractions}. Invent. Math. 128 (1997), no. 1, 23--43.

\bibitem[Hic]{Hic}
Hickerson, D. {\it Continued fractions and density results for Dedekind sums}. J. Reine Angew. Math. 290 (1977), 113--116.

\bibitem[Ish]{Ish}
Ishibashi, M. {\it The value of the Estermann zeta functions at $s=0$}. Acta Arith. 73 (1995), no. 4, 357--361.

\bibitem[Khi]{Khi}
Khinchin, A.Y. {\it Continued fractions}. The University of Chicago Press, Chicago, Ill.-London 1964.

\bibitem[MR]{MR}
Maier, H.; Rassias M.T. {\it Generalizations of a cotangent sum associated to the Estermann zeta function.} Preprint, arxiv math.NT/1410.2145



\end{thebibliography}
\end{document}